\documentclass[DIV=12,dvipsnames]{scrartcl}

\usepackage{mathtools}
\usepackage{amsmath,amssymb,mathrsfs,amsfonts,bm}

\usepackage{graphicx}
\usepackage{tcolorbox}
\graphicspath{{figures/}}

\usepackage{url}
\usepackage{xurl}

\usepackage{tabularx}
\usepackage{booktabs}
\usepackage{enumitem}

\usepackage{amsthm}

\theoremstyle{remark}
\newtheorem{remark}{Remark}
\newtheorem{example}{Example}

\theoremstyle{definition}
\newtheorem{definition}{Definition}

\theoremstyle{plain}
\newtheorem{lemma}{Lemma}
\newtheorem{theorem}{Theorem}
\newtheorem{proposition}{Proposition}
\newtheorem{corollary}{Corollary}

\newcommand{\funding}[1]{#1}
\newcommand{\email}[1]{\href{mailto:#1}{#1}}
\usepackage{scrlayer-scrpage}
\newcommand{\headers}[2]{
  \clearpairofpagestyles
  \lehead{\pagemark}\cehead{#2}
  \rohead{\pagemark}\cohead{#1}

}
\usepackage[colorlinks,urlcolor=RoyalBlue,citecolor=Green,linkcolor=BrickRed]{hyperref}
\usepackage[capitalise]{cleveref}

\newcommand{\wh}{\ensuremath{\widehat}}
\newcommand{\DA}{\ensuremath{\varDelta A}}
\newcommand{\DB}{\ensuremath{\varDelta B}}
\newcommand{\DC}{\ensuremath{\varDelta C}}

\newcommand{\C}{\ensuremath{\mathbb{C}}}
\newcommand{\R}{\ensuremath{\mathbb{R}}}

\newcommand{\Tr}{\operatorname{Tr}}

\DeclareMathOperator{\opvec}{vec}
\DeclareMathOperator{\diag}{diag}
\DeclareMathOperator{\sep}{sep}
\DeclarePairedDelimiter{\abs}{\lvert}{\rvert}
\DeclarePairedDelimiter{\norm}{\lVert}{\rVert}
\DeclareMathOperator{\rank}{rank}

\newcommand{\upxtwo}{\ensuremath{\overline{\gamma}_X}}

\newcommand{\ignore}[1]{}

\newcommand{\thetitle}{Conditioning of solutions\\to the Sylvester equation}
\newcommand{\thefunding}{This work of was supported by the Engineering and Physical Sciences Research Council [grant numbers UKRI2774; UKRI4034]}

\newcommand{\theauthori}{Massimiliano Fasi}
\newcommand{\theaffiliationi}{School of Computer Science, University of Leeds, Woodhouse Lane, Leeds
  LS2 9JT, UK}
\newcommand{\theemaili}{m.fasi@leeds.ac.uk}

\newcommand{\theauthorii}{Behnam Hashemi}
\newcommand{\theaffiliationii}{School of Computing and Mathematical Sciences, University of Leicester, University Road, Leicester LE1 7RH,
UK}
\newcommand{\theemailii}{b.hashemi@leicester.ac.uk}

\title{\thetitle\thanks{Version of 2 September 2026. \funding{\thefunding}.}}
\headers{Conditioning of solutions to the Sylvester equation}{M. Fasi and B. Hashemi}
\author{
  \theauthori{}\thanks{\theaffiliationi{} (\email{\theemaili{}}).}
  \and
  \theauthorii{}\thanks{\theaffiliationii{} (\email{\theemailii{}}).}}

\date{}

\begin{document}

\maketitle

\begin{abstract}
We partially answer an open problem, posed by Nick Higham, concerning the conditioning of solutions to Sylvester and Lyapunov equations. The question arises in the backward stability analysis of numerical algorithms for these equations. We first show that the solution to the Sylvester equation $AX-XB = C$ can be arbitrarily ill-conditioned even if $A, B, C$ and the Kronecker sum $I \otimes A - B^T \otimes I$ are all perfectly conditioned. We then derive general a priori bounds on the condition number of the solution, as well as bounds for the Sylvester equation when $A$ and $B$ are diagonalizable. We also provide lower bounds involving matrix exponentials and Zolotarev numbers. For the Lyapunov equation $AX+XA^T = -C$, we obtain upper bounds in two settings: (i) when $A$ is symmetric positive definite while $C$ is symmetric negative definite, and (ii) when $A$ is strictly dissipative and $C$ is symmetric positive definite.
\end{abstract}

\paragraph{Keywords} Sylvester equation, Lyapunov equation, backward stability, condition number

\paragraph{MSC} 65F45, 65F35

\section{The Sylvester and Lyapunov equations}

Consider the Sylvester matrix equation
\begin{equation}
\label{sylv:eq}
A X - X B = C
\end{equation}
where $A\in\R^{m\times m}$, $B\in\R^{n\times n}$, and $C\in\R^{m\times n}$ are given, and $X\in\mathbb{R}^{m\times n}$ is the solution to be determined.
We assume without loss of generality that $m \ge n$.
It is well-known that~\eqref{sylv:eq} has a unique solution if and only if $A$ and $B$ do not have a common eigenvalue.

How should we measure the accuracy of a computed solution $\wh X$ to~\eqref{sylv:eq}?
A natural answer is to consider the \emph{forward error}, which measures the distance between $\wh X$ and the exact solution $X$ to the original equation.
The forward error depends on both the sensitivity of the problem being solved and on the algorithm used to solve it, and it may be large even for a stable algorithm, if the problem is ill conditioned.

To isolate these two factors, one can rely on the \emph{backward error}, which asks for the smallest perturbations $\DA$, $\DB$, $\DC$, in a suitable normwise or componentwise sense, such that $\wh X$ is the exact solution to the perturbed Sylvester equation
\begin{equation*}
    (A + \DA) \wh X - \wh X (B + \DB) = C + \DC.
\end{equation*}

A third quantity of interest is the \emph{residual}
\begin{equation*}
    R = A\wh X - \wh X B - C,
\end{equation*}
which will be small if $\wh X$ nearly satisfies the original equation.

In the context of linear systems, a small residual is equivalent, after a suitable normalization, to a small backward error, and a computed solution with a small residual can be interpreted as the exact solution of a nearby linear system.
This is not necessarily the case for Sylvester equations: Higham~\cite{high93y} shows that a small residual does not imply a small backward error, but the backward error can be large if $\wh X$ is ill conditioned.

This motivates our interest in deriving conditions under which the Sylvester equation has a well-conditioned solution.
Higham identifies this as an open problem~\cite[p.~311]{high02} and states the problem explicitly in the analysis of the backward stability of a computed approximate solution $\wh X$ to~\eqref{sylv:eq} \cite[p.~319]{high02}.
The backward error bounds used to arrive at this condition rely on the quantity
\begin{equation}\label{eq:kappa-Y}
    \kappa_2(\wh X) = \norm{\wh X^{+}}_2 \norm{\wh X}_2 = \frac{\sigma_1(\wh X)}{\sigma_r(\wh X)},
\end{equation}
where $r \le n$ is the rank of $\wh X$, and $\sigma_1(\wh X)$ and $\sigma_r(\wh X)$ are the largest and smallest nonzero singular values of $\wh X$, respectively.

Higham~\cite[p.~312]{high02} applies a similar reasoning to the Lyapunov equation
\begin{equation}
    \label{lyap:eq-intro}
    AX + XA^T = C,
\end{equation}
noting that the condition number of the solution $X$ plays a role in the analysis of the backward error of~\eqref{lyap:eq-intro}.

We stress that the conditioning of $X$, as defined in~\eqref{eq:kappa-Y}, differs from the sensitivity of the Sylvester equation as a linear operator.
In fact, the Sylvester equation~\eqref{sylv:eq} is linear and can be recast as a linear system of order $mn$.
Let $\otimes$ denote the infix Kronecker operator and let $\operatorname{vec}$ denote the operator that stacks the columns of a matrix into a length-$mn$ vector. Vectorizing~\eqref{sylv:eq} yields the linear system
\begin{equation}
\label{sylv:eq-kron}
\mathcal{A} x = c,\qquad
\mathcal{A} = I_n\otimes A - B^{T}\otimes I_m \in \R^{mn \times mn},\qquad
x=\opvec(X) \in \R^{mn},
\end{equation}
where $I_k \in \R^{k \times k}$ is the identity matrix of order $k$.

Solving ~\eqref{sylv:eq} is equivalent to solving~\eqref{sylv:eq-kron}, and the spectral condition number of $\mathcal{A}$ is
\begin{equation*}
    \kappa(\mathcal{A}) = \norm{\mathcal{A}}_2\norm{\mathcal{A}^{-1}}_2 = \frac{\sigma_{\max}(\mathcal{A})}{\sigma_{\min}(\mathcal{A})},
\end{equation*}
which is an upper bound on, but in general not equal to, the structured condition number $\Psi$ of the Sylvester equation~\cite[p.~313]{high02}; related structured condition numbers are developed by Ghavimi and Laub~\cite{ghla95}.

In order to study the conditioning of~\eqref{sylv:eq}, Varah~\cite{vara79} uses the \emph{separation}, an operator originally introduced by Stewart~\cite{stew73} to study invariant subspaces and defined as
\begin{equation}\label{eq:separation}
    \sep(A, B) = \min_{X \neq 0} \frac{\norm{AX - XB}_F}{\norm{X}_F}.
\end{equation}
Since $\sep(A, B) = \sigma_{\min}(\mathcal{A})$, the separation quantifies the sensitivity of the solution to perturbations in $C$ when $A$ and $B$ are fixed.

The subject of our work is the conditioning of $X$ itself, which depends not only on the separation of the spectra of $A$ and $B$, but also on how the right-hand side $C$ interacts with the singular directions of the operator $\mathcal{A}$; see \Cref{experiments-sep-cond:sec} for an example.
We identify situations in which the solution~$X$ is guaranteed to be well conditioned, and we show that $X$ can be ill conditioned even when $\mathcal{A}$ is well conditioned.

Existing literature has mostly focused on conditions under which the solution $X$ is nonsingular.
Luenberger~\cite{luen65} considers the operator equation $TA-BT = C$ on a Banach space.
Assuming that the spectra of $A$ and $B$ are disjoint and that $C$ has one-dimensional range, Luenberger gives necessary and sufficient conditions for the unique solution $T$ to have trivial nullspace and dense range.
In finite dimensions, these conditions are equivalent to $T$ being of full rank.
Hearon~\cite{hear77} considers~\eqref{sylv:eq} with $m = n$ and $C = xy^*$, for $x, y \in \C^n$ and shows that if the equation is consistent and the pairs $(B, x)$ and $(A^*, y)$ are controllable, then every solution $X$ is nonsingular. This sufficient condition is also necessary if the equation has a unique solution.
Datta~\cite{datt88} gives necessary and sufficient conditions for a solution to~\eqref{sylv:eq} to be nonsingular, under the assumptions that $m = n$ and that $A$ is a normalized lower Hessenberg matrix. These conditions are based on a determinantal inequality that only involves the first $n-1$ rows of $C$.

To the best of our knowledge, only Higham~\cite{high93y} and Ghavimi and Laub~\cite{ghla95} discuss the backward error of the Sylvester equations. Higham~\cite{high93y} shows that the conventional relative residual can underestimate the backward error by an arbitrarily large factor, derives a sharp structured perturbation bound whose associated condition number can be arbitrarily smaller than the usual bound based on $\sep(A, B)^{-1}$, and develops a practically computable a posteriori bound on the forward error. Ghavimi and Laub~\cite{ghla95} extend these ideas to nonsymmetric and symmetric algebraic Riccati equations, with Sylvester and Lyapunov equations as special cases. For the Sylvester equation, they give explicit formulae for the optimal perturbations in the Frobenius norm, and they introduce a structured relative residual that is equivalent, up to a modest constant factor, to the corresponding backward error. They also propose procedures to computed an estimate of the condition number and refine it iteratively. This body of work also derives a structured expression for the backward error and a sharp perturbation bound with an associated condition number that respects the Kronecker structure of the problem. Our focus is the conditioning of the solution matrix~$X$ in the 2-norm.

The remaining sections are organized as follows. \Cref{sec:preliminaries} collects some general results used to derive bounds in following sections. \Cref{sec:special-cases} considers the simplified setting in which either $A$ or $B$ is zero. \Cref{sec:general-case} presents examples demonstrating that, in general, $X$ can be arbitrarily ill-conditioned even when $A$, $B$, $C$,  and $\mathcal{A}$ are all well conditioned. \Cref{sec:general-bounds} derives general a priori bounds on $\kappa_F(X)$ and $\kappa_2(X)$. For diagonalizable $A$ and $B$, \cref{sec:diagonalizable} establishes bounds on $\kappa_2(X)$, including lower bounds in terms of Zolotarev numbers, as well as results for the Lyapunov equation~\eqref{lyap:eq-intro}. \Cref{integral-rep:sec} uses integral representations to derive further lower and upper bounds for $\kappa_2(X)$ in the Lyapunov case. Finally, \cref{sec:experiments} concludes the discussion by presenting some illustrative numerical experiments.

\section{Preliminaries}\label{sec:preliminaries}

\begin{lemma} \label{basic-sv-ineq:lem}
(i) Let $M$ and $N$ be conformable matrices, at least one of which is square. Then, \cite[p.~616, Cor.~9.6.6]{Bernstein09}
\begin{align}
\sigma_{\min} (M)\, \sigma_{\min} (N)
\le
\sigma_{\min} (MN)
\le&
\sigma_{\max} (M)\, \sigma_{\min} (N)\label{min-sv-prod-bounds:eq}\\
\sigma_{\min} (M)\, \sigma_{\max} (N)
\le
\sigma_{\max} (MN)
\le&
\sigma_{\max} (M)\, \sigma_{\max} (N)\label{max-sv-prod-bounds:eq}
\end{align}
\ignore{
(i) Let $M$ and $N$ be matrices of compatible size such that $M$ has full column rank and $N \neq 0$ or $M \neq 0$ and $N$ is of full row rank. Then,
\[
\sigma_{\min} (MN) \ge \sigma_{\min} (M) \sigma_{\min} (N)
\]
where $\sigma_{\min}(X)$ refers to the smallest NONZERO singular value of $X$.
}

(ii) Let $M$ and $N$ be two $m \times n$ matrices. Then, \cite[p.~617, Cor.~9.6.9]{Bernstein09}
\begin{align}
\sigma_{\min} (M) - \sigma_{\max} (N) \le& \sigma_{\min} (M \pm N) \le \sigma_{\min} (M) + \sigma_{\max} (N).\label{sum-minsv-bound:eq}
\end{align}

\end{lemma}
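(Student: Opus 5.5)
Both parts follow from the variational (Courant--Fischer) description of the extreme singular values. For a $p\times q$ matrix $M$ we use $\sigma_{\max}(M)=\max_{\norm{v}_2=1}\norm{Mv}_2$, and, when $p\ge q$, $\sigma_{\min}(M)=\min_{\norm{v}_2=1}\norm{Mv}_2$. When $p<q$ we apply the same formula to $M^T$, which has the same singular values. Since transposition swaps the order of factors in a product, we can pass to transposes whenever needed and assume that the relevant matrices have at least as many rows as columns. For part (i) this is where the hypothesis that one of $M,N$ is square is used: it keeps the number of singular values consistent across $M$, $N$ and $MN$, so that $\sigma_{\min}$ is the $\min$ of $\norm{\cdot v}_2$ for each of the three matrices after a suitable transposition.

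For \eqref{max-sv-prod-bounds:eq}, the upper bound is submultiplicativity of the 2-norm. For the lower bound, pick a unit $v$ with $\norm{Nv}_2=\sigma_{\max}(N)$. Then
\[
\sigma_{\max}(MN)\ge\norm{M(Nv)}_2\ge\sigma_{\min}(M)\norm{Nv}_2=\sigma_{\min}(M)\,\sigma_{\max}(N),
\]
where the middle inequality needs $\norm{Mw}_2\ge\sigma_{\min}(M)\norm{w}_2$ for all $w$. This holds when $M$ has at least as many rows as columns, which is the case after transposing if necessary.

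For \eqref{min-sv-prod-bounds:eq}, the lower bound follows in the same way by minimizing $\norm{MNv}_2\ge\sigma_{\min}(M)\norm{Nv}_2\ge\sigma_{\min}(M)\sigma_{\min}(N)$ over unit $v$. For the upper bound, choose a unit $v$ with $\norm{Nv}_2=\sigma_{\min}(N)$. Then
\[
\sigma_{\min}(MN)\le\norm{MNv}_2\le\sigma_{\max}(M)\,\sigma_{\min}(N).
\]

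Part (ii) is a Weyl-type perturbation inequality and is proved the same way, again after transposing so that $m\ge n$. For every unit $v$, the triangle inequality gives
\[
\norm{Mv}_2-\norm{Nv}_2\le\norm{(M\pm N)v}_2\le\norm{Mv}_2+\norm{Nv}_2 .
\]
For the upper bound, take $v$ to be a minimizing singular vector of $M$, so that $\norm{Mv}_2=\sigma_{\min}(M)$, and use $\norm{Nv}_2\le\sigma_{\max}(N)$. For the lower bound, minimize over $v$, using $\norm{Mv}_2\ge\sigma_{\min}(M)$ and $\norm{Nv}_2\le\sigma_{\max}(N)$ for every unit $v$.

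The main obstacle is the bookkeeping of dimensions in part (i): the identity $\sigma_{\min}(M)=\min_{\norm{v}_2=1}\norm{Mv}_2$ is only valid when $M$ has at least as many rows as columns. We therefore need to check, case by case, that the squareness of one factor lets us transpose so that this identity applies to each of $M$, $N$ and $MN$ (for instance, $N$ square and $M$ tall, or $M$ square and $N$ wide with everything transposed). The statement is the cited \cite[Cor.~9.6.6, Cor.~9.6.9]{Bernstein09}, so citing it suffices; the argument above is only a self-contained check.
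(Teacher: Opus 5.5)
\textbf{Gap in part (i).} Your part (ii) is fine: no product is involved, so transposing is harmless, and the triangle-inequality argument is complete. The gap is in part (i), in the dimension bookkeeping you defer to a case-by-case check.

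First, transposition does not preserve the bounds. Transposing $MN$ gives $N^TM^T$, which swaps the roles of the two factors, but two of the four bounds are not symmetric in $M$ and $N$. Running your arguments on $N^TM^T$ proves $\sigma_{\min}(MN)\le\sigma_{\min}(M)\,\sigma_{\max}(N)$ and $\sigma_{\max}(MN)\ge\sigma_{\max}(M)\,\sigma_{\min}(N)$, not the stated ones.

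Second, the check cannot succeed in every configuration the hypothesis allows. With your convention $\sigma_{\min}(A)=\sigma_{\min\{p,q\}}(A)$ for $A\in\R^{p\times q}$, both asymmetric bounds fail when $N$ is square and $M$ is strictly wide. Take $N=\diag(1,\varepsilon)$ with $0<\varepsilon<1$. The choice $M=\begin{pmatrix}1&0\end{pmatrix}$ gives $MN=M$ and $\sigma_{\min}(MN)=1>\varepsilon=\sigma_{\max}(M)\,\sigma_{\min}(N)$. The choice $M=\begin{pmatrix}0&1\end{pmatrix}$ gives $\sigma_{\max}(MN)=\varepsilon<1=\sigma_{\min}(M)\,\sigma_{\max}(N)$. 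So your claim that a square factor keeps the number of singular values consistent is false here: $N$ has two singular values and $MN$ only one. The paper offers no argument beyond the citation, so the defect lies in the hypothesis ``at least one of which is square'' as transcribed. No citation can justify the statement in this configuration, so your remark that citing suffices does not close the gap.

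\textbf{What is true and how to prove it.} Part (i) holds when $M$ is square, or when $M$ and $N$ both have at least as many rows as columns; the latter includes $N$ square with $M$ tall. In the second case your untransposed argument works verbatim, because $\sigma_{\min}(P)=\min_{\norm{v}_2=1}\norm{Pv}_2$ is valid for $P=M$, $N$, and $MN$. For $M$ square and $N$ wide, do not transpose. From $\sigma_{\min}(M)^2\,N^TN\preceq N^TM^TMN\preceq\sigma_{\max}(M)^2\,N^TN$ and Weyl's monotonicity theorem you get $\sigma_{\min}(M)\,\sigma_i(N)\le\sigma_i(MN)\le\sigma_{\max}(M)\,\sigma_i(N)$ for every $i$. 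This yields all four bounds at once, and in fact covers both admissible cases. Alternatively, with the convention $\sigma_{\min}(A)=\min_{\norm{v}_2=1}\norm{Av}_2$, which is $0$ for wide $A$, your untransposed arguments prove (i) for all conformable shapes.

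\textbf{Effect on the paper.} In \cref{diagonalizable-sylv:prop}, the lemma is applied to $V_A$ (square) times $\widetilde C\circ R$ ($m\times n$ with $m\ge n$), and then to this tall product times $V_B^{-1}$ (square). Both configurations are admissible, and the two conventions agree on all these matrices, so the paper's use of the lemma is unaffected.
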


Note that part (i) does not necessarily hold if both matrices $M$ and $N$ are rectangular.

\begin{definition}
Let $M$ be a square matrix. The logarithmic norm induced by the Euclidian norm, denoted by $\mu(M)$, is the largest eigenvalue of its symmetric part, i.e.,
\[
\mu(M) = \lambda_{\max} \biggl(\frac{M+M^T}{2}\biggr).
\]
\end{definition}

The following result bounds the matrix exponential in terms of the logarithmic norm. It is based on~\cite[Thm.~10.11]{high08}  and $\mu(t M) = t \mu(M)$ for any $t \ge 0$.
\begin{lemma} \label{log-norm:lem}
For any square matrix $M$ and any $t \ge 0$, we have $\| e^{M t} \|_2 \leq e^{t\mu(M)}$.
\end{lemma}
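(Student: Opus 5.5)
The statement is Lemma~\ref{log-norm:lem}: $\norm{e^{Mt}}_2 \le e^{t\mu(M)}$ for $t\ge 0$. The plan is a direct energy argument on the linear ODE $y'(s) = M y(s)$. It is self-contained, and it also recovers the cited result~\cite[Thm.~10.11]{high08} in the special case needed.

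\emph{Setting up the energy.} Fix $t\ge 0$ and an arbitrary vector $v$. Put $y(s) = e^{Ms}v$ for $s\in[0,t]$, so that $y(0)=v$ and $y'(s) = M y(s)$. Define the energy $\phi(s) = \norm{y(s)}_2^2 = y(s)^T y(s)$. In the complex case we use $y^*y$ and the Hermitian part; the argument is identical. Differentiating gives
\[
\phi'(s) = y^T(M + M^T) y = 2\, y^T \Bigl(\tfrac{M+M^T}{2}\Bigr) y .
\]
By the Rayleigh quotient bound for the symmetric matrix $(M+M^T)/2$, this is at most $2\mu(M)\,\phi(s)$.

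\emph{Grönwall step.} Consider $\psi(s) = e^{-2\mu(M)s}\phi(s)$. Then
\[
\psi'(s) = e^{-2\mu(M) s}\bigl(\phi'(s) - 2\mu(M)\phi(s)\bigr) \le 0 ,
\]
so $\psi$ is nonincreasing. Hence $\phi(t)\le e^{2\mu(M)t}\phi(0)$, that is, $\norm{e^{Mt}v}_2 \le e^{t\mu(M)}\norm{v}_2$. Taking the supremum over unit vectors $v$ gives the claim.

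\emph{Main obstacle.} The only subtle point is the sign of $t$. The step $\psi'\le 0 \Rightarrow \psi(t)\le\psi(0)$ requires $t\ge 0$. This matches the remark that $\mu(tM)=t\mu(M)$ only for $t\ge0$, since $\mu$ is positively homogeneous but not odd.

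\emph{Alternative route.} One could instead mirror the paper's framing. Use $e^{M}=\lim_k (I+M/k)^k$ together with $\norm{I+hM}_2 \le 1+h\mu(M)+O(h^2)$, which follows because $\norm{I+hM}_2^2=\lambda_{\max}\bigl(I+h(M+M^T)+h^2M^TM\bigr)$. Applying this to $tM$ and using homogeneity gives the same bound. The ODE argument is cleaner, so I would present that one.
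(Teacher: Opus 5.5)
Your proof is correct, but it takes a different route from the paper. The paper gives no argument of its own: it cites the logarithmic-norm bound for the matrix exponential in \cite[Thm.~10.11]{high08}, applies it to $tM$, and uses the positive homogeneity $\mu(tM)=t\mu(M)$ for $t\ge 0$.

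Your energy/Gr\"onwall argument has three advantages. It is self-contained. It works directly with the paper's definition $\mu(M)=\lambda_{\max}\bigl((M+M^T)/2\bigr)$, so it never needs to identify this quantity with the general logarithmic norm $\lim_{h\to 0^+}(\norm{I+hM}_2-1)/h$; the paper asserts that identification only implicitly, by calling $\mu$ ``the logarithmic norm induced by the Euclidean norm''. And it treats $t$ directly through the ODE on $[0,t]$, so homogeneity is never invoked and the restriction $t\ge 0$ shows up transparently in the monotonicity step.

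What you give up is generality. The identity $\phi'(s)=y^T(M+M^T)y$ depends on the Euclidean inner product, whereas the cited bound holds for every subordinate norm with its associated logarithmic norm.

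Your alternative route through $(I+M/k)^k$ and $\norm{I+hM}_2\le 1+h\mu(M)+O(h^2)$ is essentially the standard argument behind the cited result, specialized to the 2-norm. That same expansion of $\norm{I+hM}_2^2$ is exactly the computation that reconciles the two definitions of $\mu$.
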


\begin{definition}
A square matrix $M$ is called strictly dissipative if its symmetric part $\frac{M+M^T}{2}$ is negative definite.
\end{definition}
If $M$ is strictly dissipative, then there exist constants $0< \beta \le \gamma$ such that the eigenvalues of the symmetric part of $M$ lie in the real interval $[-\gamma, -\beta]$ and this is equivalent to
\[
\mu(M) \le -\beta, \qquad \mbox{ and } \qquad \mu(-M) \le \gamma.
\]
Note also that while every symmetric negative definite matrix is dissipative, a strictly dissipative matrix does not have to be symmetric.

We now state a result that we will use later. We use the operator $\sep_2$, which denotes the separation defined as in~\eqref{eq:separation}, but using the 2-norm instead of the Frobenius norm.

\begin{lemma}\label{sol-log-sep:lem}
Let $A$ and $B$ be real square matrices. Then,
\begin{equation}\label{eq:spectral-sep}
\sep_2(A,B) \ge -(\mu(-A) + \mu(B)).
\end{equation}
\end{lemma}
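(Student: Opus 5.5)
We need to prove that $\norm{AX - XB}_2 \ge -(\mu(-A)+\mu(B))\norm{X}_2$ for every $X$. We use a Rayleigh-quotient (numerical range) argument in the 2-norm. Fix $X \neq 0$ and take unit singular vectors $u \in \R^m$, $v \in \R^n$ with $Xv = \sigma u$ and $X^T u = \sigma v$, where $\sigma = \norm{X}_2$. The idea is to test $AX - XB$ against this pair, since $\norm{AX-XB}_2 \ge \abs{u^T(AX-XB)v}$.

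First compute $u^T A X v = \sigma\, u^T A u$ and $u^T X B v = (X^T u)^T B v = \sigma\, v^T B v$. Hence
\[
\norm{AX-XB}_2 \ge \sigma\,\abs{u^T A u - v^T B v}.
\]
Next bound each Rayleigh quotient by the logarithmic norm. For real unit $w$ we have $w^T M w = w^T \tfrac{M+M^T}{2} w \le \mu(M)$. Therefore $u^T A u = -u^T(-A)u \ge -\mu(-A)$ and $v^T B v \le \mu(B)$. Consequently
\[
u^T A u - v^T B v \ge -(\mu(-A)+\mu(B)).
\]

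If the right-hand side is nonpositive, the claim is trivial, because $\sep_2 \ge 0$. Otherwise the difference of quotients is positive and bounded below by $-(\mu(-A)+\mu(B))$, so its absolute value is too. Taking the minimum over $X\neq 0$ then gives \eqref{eq:spectral-sep}.

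The only subtle step is choosing test vectors that make both $AX$ and $XB$ collapse to Rayleigh quotients simultaneously. The top singular pair of $X$ does exactly this: it uses $Xv=\sigma u$ on the left and $X^Tu=\sigma v$ on the right. Realness of $A$, $B$ and $X$ ensures the singular vectors are real, so the symmetric-part identity applies directly.

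As an alternative route, one could use the integral representation $X = -\int_0^\infty e^{At}Ce^{-Bt}\,dt$ together with \Cref{log-norm:lem}. That approach requires convergence assumptions, so the direct argument above is preferable.
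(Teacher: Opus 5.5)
Your proof is correct and is essentially the paper's own argument: test $AX-XB$ against the top singular pair $(u,v)$ of $X$ to get $\norm{AX-XB}_2\ge\norm{X}_2\abs{u^TAu-v^TBv}$, bound the Rayleigh quotients by $-\mu(-A)$ and $\mu(B)$, and dispose of the case $\mu(-A)+\mu(B)\ge 0$ trivially. A small slip in your closing aside, which does not affect the main argument since you do not use it: the representation $-\int_0^\infty e^{At}Ce^{-Bt}\,dt$ needs $\mu(A)+\mu(-B)<0$ and would yield the companion bound $\sep_2(A,B)\ge-(\mu(A)+\mu(-B))$ rather than the stated one; the integral matching this lemma is $\int_0^\infty e^{-tA}Ce^{tB}\,dt$.
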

\begin{proof}
    If $\mu(-A) + \mu(B) \ge 0$, then~\eqref{eq:spectral-sep} is satisfied because $\sep(A,B)_2 \ge 0$.
    Otherwise, let $u$ and $v$ be the unit left and right singular vectors of $X$ corresponding to the leading singular value $\norm{X}_2$. Then, from
    \begin{equation*}
        Xv = \norm{X}_2 u,\qquad X^Tu = \norm{X}_2 v,\qquad \norm{u}_2 = \norm{v}_2 = 1,
    \end{equation*}
    we can write
    \begin{equation*}
        \norm{AX-XB}_2 \ge \abs{u^T(AX - XB)v} = \norm{X}_2 \abs{u^TAu - v^TBv} \ge -(\mu(-A) + \mu(B))\norm{X}_2,
    \end{equation*}
    where the last step follows from the fact that $u^TAu \ge -\mu(-A)$ and $v^TBv \le \mu(B)$. Dividing by $\norm{X}_2$ and minimizing over $X \neq 0$ concludes the proof.
\end{proof}

\section{Simpler special cases}
\label{sec:special-cases}
We begin by the case in which $A$ is nonsingular and $B=0$. Then~\eqref{sylv:eq} reduces to
\begin{equation}
\label{rightInverse:eq}
A X = C,
\end{equation}
whose solution is $X = A^{-1} C$ and included in this case is (right) matrix inversion. Using a consistent
norm, we obtain the bounds
\begin{align*}
\|X\| &= \|A^{-1}C\| \le \|A^{-1}\|\,\|C\|,\\
\|C\| &= \|A X\| \le \|A\|\,\|X\|,
\end{align*}
which yield
\begin{equation}
\label{rightInverse-bound:eq}
\frac{\|C\|}{\|A\|} \le \|X\| \le \|A^{-1}\| \|C\|
\end{equation}
and in particular for the 2-norm
\[
\frac{\|C\|_2}{\sigma_{\max}(A)} \le \|X\|_2 \le \frac{\|C\|_2}{\sigma_{\min}(A)}.
\]

If $X$ is invertible (so $C$ is invertible as well), then $X^{-1}=C^{-1}A$, and similarly, we obtain
\begin{align*}
& \|X^{-1}\|  = \|C^{-1}A\| \le \|C^{-1}\|\,\|A\|,\\
& \|A\|  =\|C X^{-1}\| \le \|C\| \, \|X^{-1}\|
\quad\Rightarrow\quad
\|X^{-1}\| \ge \frac{\|A\|}{\|C\|}.
\end{align*}
Thus one may write (in analogy with the bounds for $\|X\|$
\begin{equation}
\label{rightInverse-inverse-bound:eq}
\frac{\|A\|}{\|C\|} \le \|X^{-1}\| \le \|C^{-1}\| \|A\|,
\end{equation}
and therefore for the two norm
\[
\frac{\|A\|_2}{\sigma_{\max}(C)} \le \|X^{-1}\|_2 \le \frac{\|A\|_2}{\sigma_{\min}(C)}.
\]
Combining the bounds~\eqref{rightInverse-bound:eq} and~\eqref{rightInverse-inverse-bound:eq} gives a bound on the condition number $\kappa(X)$.
\begin{equation}
\label{kappa-AXB:eq}
1 \le \kappa(X) \le \kappa(A)\,\kappa(C).
\end{equation}

Hence, in the special case $B=0$, the conditioning of the solution $X$ is controlled by the conditioning of $A$ and $C$: if both $A$ and $C$ are well-conditioned, then $X$ will be well-conditioned; conversely,  $X$ could be ill conditioned, if $A$ or $C$ have a large condition number.

If either $A$ or $C$ is ill-conditioned, then the solution $X$ to~\eqref{rightInverse:eq} might be ill-conditioned. For instance,
\begin{itemize}
\item Let $A=I$ and let $C=H_n$ be the ill conditioned Hilbert matrix. Then $X=C$ is as ill-conditioned.
\item Let $C=I$ and let $A=H_n$ be the Hilbert matrix. Then $X=A^{-1}$ is as ill conditioned as~$A$.
\end{itemize}

In the same spirit, it is straightforward to see that the solution of $X B = C$ satisfies
\[
1 \le \kappa(X) \le \kappa(B)\,\kappa(C).
\]

\ignore{
\subsection{Special case $A=0$, $B$ nonsingular}
While~\eqref{rightInverse:eq} corresponds to computing a right-inverse of $A$, the special case
\begin{equation}
\label{leftInverse:eq}
X B = C,
\end{equation}
corresponds to computing a left inverse of $B$. Although left and right inverses of a nonsingular matrix coincide in exact arithmetic, they may behave quite differently in finite precision arithmetic. For an illustrative contrast, see the image on the front cover of Higham's book~\cite{high02}; see also~\cite[p.~262]{high02}. We also note that the MATLAB routine \texttt{inv} computes a left inverse~\cite{rump2006error}, which helps explain why the left and right residuals may differ substantially in practice.

It is easy to see that the solution to~\eqref{leftInverse:eq} satisfies
\[
1 \le \kappa(X) \le \kappa(B)\,\kappa(C).
\]
}

\section{General case} \label{sec:general-case}
Unfortunately, the bounds derived for the special cases in \cref{sec:special-cases} do not generalize to the Sylvester equation.
To see this, consider~\eqref{sylv:eq} and the following example.
\begin{example} \label{amazing:ex}
Let
\begin{equation}\label{eq:counterexample}
  A = \begin{pmatrix} 0 & 1\\1 & 0 \end{pmatrix},\qquad
  B = \begin{pmatrix} 0 & -2\\-2 & 0 \end{pmatrix},\qquad
  X = \begin{pmatrix} \varepsilon & 0\\0 & 1 \end{pmatrix},
\end{equation}
The eigenvalues of $A$ and $B$ are $\pm 1$ and $\pm 2$, respectively, thus the equation has a unique solution for any $C$. For the solution $X$ in \eqref{eq:counterexample} we have
\begin{equation*}
  C = \begin{pmatrix} 0 & 1+2\varepsilon\\2 + \varepsilon & 0 \end{pmatrix}.
\end{equation*}
The three matrices $A$, $B$, and $C$ are all well conditioned, since
\begin{equation*}
  \kappa_{2}(A) = \kappa_{2}(B) = 1,\qquad 1 < \kappa_{2}(C) = \frac{2+\varepsilon}{1 + 2\varepsilon} \le 2,
\end{equation*}
but the solution can be arbitrarily ill conditioned, since $\kappa_2(X) = \varepsilon^{-1}$ tends to $\infty$ as $\varepsilon$ nears~0.

This example also shows that the solution to a well-conditioned Sylvester equation can be ill conditioned.
In fact, the Kronecker matrix $\mathcal{A}$ in \eqref{sylv:eq-kron} has two singular values equal to $1$ and two equal to $3$, thus $\kappa_2(\mathcal{A}) = 3$ and the equation is well conditioned irrespective of $C$.

\begin{example}
\label{eq:counterexample-lyap}
We note that the same phenomenon occurs for the Lyapunov equation $AX+XA^T = -C$ with $A$, $X$ and $C$ as follows
\begin{equation}\label{eq:Lyap-counterexample}
  A = \begin{pmatrix} 2 & 1\\0 & -1 \end{pmatrix},\qquad
  X = \begin{pmatrix} \varepsilon & 0\\0 & 1 \end{pmatrix},\qquad
C = \begin{pmatrix} -4\varepsilon & -1\\-1 & 2 \end{pmatrix}
\end{equation}
for which $\kappa_2(A) \approx 2.6$, $\kappa_2(\mathcal{A}) \approx 5.7$, and $\kappa_2(C) < 5.9$ for $\varepsilon$ near zero.
\end{example}

It is worth observing that, in both \cref{amazing:ex,eq:counterexample-lyap}, all the input matrices $A$, $B$ and $C$ are indefinite.
\end{example}
The main strength of the bounds we derive in \cref{sec:general-bounds} lies in their applicability to general problems. The price of this generality, however, is that the bounds can be rather loose for examples such as the two above. In \cref{sec:diagonalizable,integral-rep:sec}, we therefore turn to bounds tailored to more specific classes of problems. In particular, in \cref{spd-A-C-lyap:thm,Lyap-dissipativeA:thm}, we establish sufficient conditions that guarantee that bounds resembling~\eqref{kappa-AXB:eq} hold for the Lyapunov equation.

\section{General a priori bounds on the condition number} \label{sec:general-bounds}

\paragraph{Frobenius norm} We begin by deriving a lower bound on the condition number in the Frobenius norm.
Let $X$ have full rank, and let $\sigma_1(X) \ge \ldots \ge \sigma_n(X) > 0$ be its singular values.
Then
\begin{equation}
    \norm{X}_F^2=\sum_{i=1}^n \sigma_i(X)^2,
    \qquad
    \norm{X^+}_F^2=\sum_{i=1}^n \sigma_i(X)^{-2}.
\end{equation}
Hence, by Cauchy's inequality, we have that
\begin{equation*}
\kappa^2_F(X) = \norm{X}_F^2\norm{X^+}_F^2
=
\left(\sum_{i=1}^n \sigma_i(X)^2\right)
\left(\sum_{i=1}^n \sigma_i(X)^{-2}\right)
\ge
\left(\sum_{i=1}^n 1\right)^2
=
n^2,
\end{equation*}
and taking the square root gives the lower bound $X^+$
\begin{equation*}
\norm{X^+}_F \ge \frac{n}{\norm{X}_F}.
\end{equation*}

As \cref{amazing:ex} suggests, we cannot give a nontrivial upper bound purely in terms of $A$, $B$, and $C$. However, we can bound the Frobenius norm condition number from above in terms of the spectral condition number. Since
\begin{equation*}
    \kappa_2(x) = \frac{\sigma_1(X)}{\sigma_n(X)}
\end{equation*}
Since $\sigma_n^2 \le \sigma_i(X)^2 \le \sigma_1(X)^2$, Kantorovich's inequality~\cite[p.~63, Fact 1.17.37]{Bernstein09}
gives
\begin{equation*}
\kappa_F^2(X)=
\left(\sum_{i=1}^n \sigma_i(X)^2\right)
\left(\sum_{i=1}^n \sigma_i(X)^{-2}\right)
\le
n^2 \frac{\bigl(\sigma_1(X)^2+\sigma_n(X)^2\bigr)^2}{4\sigma_1(X)^2\sigma_n(X)^2},
\end{equation*}
and by taking the square root we obtain
\begin{equation}\label{eq:cond-frob-upper-bound}
\kappa_F(X)
\le n \frac{\sigma_1(X)^2+\sigma_n(X)^2}{2\sigma_1(X)\sigma_n(X)}
= \frac{n}{2}\left(\kappa_2(X) + \frac{1}{\kappa_2(X)}\right).
\end{equation}
Therefore, any upper bound on $\kappa_2(X)$ yields an upper bound on $\kappa_F(X)$.

We remark that~\eqref{eq:cond-frob-upper-bound} is tighter than the bound $\kappa_F(X) \le n \kappa_2(X)$, which follows directly from the norm inequality $\norm{A}_F \le \sqrt{\rank(A)} \norm{A}_2$.

\paragraph{Spectral norm} In order to bound $\kappa_2(X)$, we need bounds on the 2-norm of $X$ and $X^+$.
We begin by bounding these in terms of the Frobenius norm.

To bound $\norm{X}_2$, note that
\begin{equation*}
    \norm{C}_F = \norm{\opvec C}_2 = \norm{\mathcal{A} \opvec (X)}_2 \le \sigma_{\max}(\mathcal A)\norm{X}_F,
\end{equation*}
which combined with the well-known matrix inequality
\begin{equation*}
    \norm{X}_2 \ge \frac{\norm{X}_F}{\sqrt{n}}
\end{equation*}
yields
\begin{equation}
    \label{eq:lower-sep-X}
    \frac{\norm{C}_F}{\sqrt{n}\sigma_{\max}(\mathcal{A})}
    \le
    \norm{X}_2
    \le
    \norm{X}_F.
\end{equation}

We now turn to $\norm{X^+}_2$.
For the lower bound, from
\begin{equation*}
    \norm{X}_F \le \norm{\mathcal{A}^{-1}}_2 \norm{C}_F = \frac{\norm{C}_F}{\sigma_{\min}(\mathcal{A})}
\end{equation*}
we can conclude that
\begin{equation}
    \label{eq:lower-sep-Xpsinv}
    \norm{X^+}_2 = \frac{1}{\sigma_{\min}(X)}
    \ge \frac{1}{\norm{X}_F}
    \ge \frac{\sigma_{\min}(\mathcal{A})}{\norm{C}_F}
    = \frac{\sep(A,B)}{\norm{C}_F}.
\end{equation}

Deriving an upper bound on $\norm{X^+}$ requires more care. Note that
\[
\lVert x\rVert_F = \lVert x\rVert_2 = \lVert X\rVert_F,
\]
but the analogous equality does not hold for the pseudoinverse.
In fact, as long as $X \neq 0$, we have that
\begin{equation}
\label{eq:vec-norm}
\opvec(X)^+ = x^+ = \frac{x^T}{x^Tx} \neq \opvec(X^+),
\end{equation}
where $x^+$ is a row vector while $\opvec(X^+)$ is a column vector.
Thus,
\[
\lVert x^{+}\rVert_2 = \lVert x^{+}\rVert_F \neq \lVert X^{+}\rVert_F.
\]
A consequence of~\eqref{eq:vec-norm} is that
\begin{equation*}
    \norm{x^+}_2 = \frac{\sqrt{x^Tx}}{x^T x} = \frac{1}{\norm{x}_2},
\end{equation*}
and
\[
\|x^{+}\|_{2} \le \frac{1}{\sigma_{\min}(X)} = \|X^{+}\|_{2}.
\]

If $m\ge n$ and $X$ has full column rank, the pseudoinverse \mbox{$X^{+}=(X^{T}X)^{-1}X^{T}$} satisfies \mbox{$X^{+}X=I_n$}. Consequently, the spectral norm of \(X^{+}\) is the reciprocal of the smallest singular value of \(X\):
\begin{equation}\label{eq:tnorm-ps-inv}
\lVert X^{+}\rVert_2 = \frac{1}{\sigma_{\min}(X)} = \frac{1}{\sigma_n(X)}.
\end{equation}
The Frobenius norm of \(X^{+}\) can be expressed as
\begin{equation}\label{eq:fnorm-ps-inv}
\lVert X^{+}\rVert_F^2 = \sum_{i=1}^n \frac{1}{\sigma_i(X)^2}.
\end{equation}
From~\eqref{eq:tnorm-ps-inv} and \eqref{eq:fnorm-ps-inv}, we have
\begin{equation*}
\norm{X^{+}}_F = \left(\sum_{i=1}^n \frac{1}{\sigma_i(X)^2}\right)^{1/2} \ge
    \sqrt{\frac{1}{\sigma_n(X)^2}} = \frac{1}{\sigma_n(X)} = \norm{X^{+}}_2,
\end{equation*}
and using the fact that $\sigma_{\min}(X) \le \norm{X}_F$ gives the lower bound
\begin{equation}\label{eq:upper-Xpsinv}
    \norm{X^+}_2 = \frac{1}{\sigma_{\min}(X)} \ge \frac{1}{\norm{X}_F}.
\end{equation}
Combining~\eqref{eq:lower-sep-Xpsinv} and~\eqref{eq:upper-Xpsinv}, we obtain the chain of inequalities
\begin{equation}\label{eq:bounds-norm-psinv}
    \frac{\sep(A,B)}{\norm{C}_F}
    \le \frac{1}{\norm{X}_F}
    \le \norm{X^{+}}_2
    \le \norm{X^{+}}_F.
\end{equation}

Combining \eqref{eq:lower-sep-X} and \eqref{eq:lower-sep-Xpsinv} yields the lower bound
\begin{equation}
\label{eq:norm2-lower-bound}
    \kappa_2(X) \ge \frac{1}{\sqrt{n}}\cdot\frac{1}{\kappa(\mathcal{A})}.
\end{equation}
We note that~\eqref{eq:norm2-lower-bound} is not very informative, because it is can never be stronger than the obvious $\kappa_2(A) \ge 1$.

We can obtain a sharper lower bound in terms the spectral norm of $A$ and $B$, as we now explain.
To bound $\norm{X}_2$ from above, observe that for any norm we have
\[
\norm{C}=\norm{AX-XB} \ge \Big| \norm{AX}-\norm{XB} \Big|,
\]
and that for the spectral norm we also have\footnote{Also, when $m=n$, we have $\|AX\|_2 \ge \sigma_{\min}(X)\,\|A\|_2$. It follows that
\[
\kappa_2(X) \ge \frac{\|A\|_2}{\|B\|_2} - \frac{\|C\|_2}{\|B\|_2\ \sigma_{\min} (X)}
\]}
\begin{equation}\label{eq:norm-to-sv}
\norm{AX}_2 \ge \sigma_{\min}(A)\norm{X}_2,\qquad
\norm{XB}_2 \le \norm{X}_2\norm{B}_2.
\end{equation}
If $\sigma_{\min}(A) > \norm{B}_2$, then
\[
\norm{C}_2 \ge \norm{AX}_2-\norm{XB}_2 \ge \norm{X}_2 \big(\sigma_{\min}(A)-\norm{B}_2\big) \ge 0
\]
gives the upper bound
\begin{equation}
\label{X-2-norm-upper-bound:eq}
\norm{X}_2 \le \frac{\norm{C}_2}{\sigma_{\min}(A)-\norm{B}_2}.
\end{equation}
Analogously, we can swap the role of $A$ and $B$ in~\eqref{eq:norm-to-sv} to obtain
\begin{equation}
\label{X-2-norm-upper-bound-B:eq}
\norm{X}_2 \le \frac{\norm{C}_2}{\sigma_{\min}(B)-\norm{A}_2},
\end{equation}
assuming that $\sigma_{\min}(B) > \norm{A}_2$, so that the denominator of~\eqref{X-2-norm-upper-bound-B:eq} is positive.

\begin{remark}
The bound~\eqref{X-2-norm-upper-bound:eq} also appears in~\cite[p.~18]{NakatsukasaThesis}, where it is used to give an elegant proof of the $\sin\Theta$ theorem~\cite[pp.~180--189]{NakatsukasaThesis}.
\end{remark}

We can also bound $\norm{X}_2$ from below in terms of the 2-norms of $A$, $B$, and $C$.
In fact, from
\begin{equation*}
    \norm{C}_2 = \norm{AX - XB}_2 \le \bigl(\norm{A}_2 + \norm{B}_2\bigr)\norm{X}_2
\end{equation*}
we immediately obtain
\begin{equation}
    \label{X-2-norm-lower-bound:eq}
    \norm{X}_2 \ge \frac{\norm{C}_2}{\norm{A}_2 + \norm{B}_2}.
\end{equation}
Combining~\eqref{X-2-norm-upper-bound:eq}, \eqref{X-2-norm-upper-bound-B:eq}, and~\eqref{X-2-norm-lower-bound:eq} we obtain the chain of inequalities
\begin{equation}\label{eq:bounds-norm-X}
    \frac{\norm{C}_2}{\norm{A}_2 + \norm{B}_2} \le \norm{X}_2 \le \upxtwo,\qquad
    \upxtwo = \frac{\norm{C}_2}{\max\{\sigma_{\min}(A)-\norm{B}_2, \sigma_{\min}(B)-\norm{A}_2\}}.
\end{equation}

From~\eqref{eq:bounds-norm-psinv} and~\eqref{eq:bounds-norm-X}, we obtain
\begin{equation}
    \label{eq:alternative}
    \begin{aligned}
    \kappa_2(X) &= \norm{X}_2 \norm{X^+}_2
    \ge \frac{\norm{C}_2}{\norm{A}_2 + \norm{B}_2} \frac{\sep(A, B)}{\norm{C}_F}
    \ge \frac{\sep(A, B)}{\sqrt{\rank C}\bigl(\norm{A}_2 + \norm{B}_2\bigr)},
    \end{aligned}
\end{equation}
using the fact that $\norm{C}_F \le \sqrt{\rank C} \cdot \norm{C}_2$.

Is is straightforward to obtain an upper bound on the spectral condition number in terms of the Frobenius condition number, since
\begin{equation*}
    \kappa_2(X) = \norm{X}_2 \norm{X^{+}}_2 \le \norm{X}_F \norm{X^{+}}_F = \kappa_F(X).
\end{equation*}

\section{Bounds for diagonalizable matrices}
\label{sec:diagonalizable}

The derivation in this section closely follows that in~\cite[Sec.~3.1]{frha12}. Assume that $A$ and $B$ are both
diagonalizable, so that they we have the spectral decompositions
\begin{align} \label{specdecA:eq}
A &= V_A D_A V_A^{-1},\quad
&&\text{with}\quad V_A, D_A \in \mathbb{C}^{m \times m},\quad
&D_A &= \diag(\lambda_1,\ldots,\lambda_m),\\
 \label{specdecB:eq}
B &= V_B D_B V_B^{-1},\quad
&&\text{with}\quad V_B, D_B \in \mathbb{C}^{n \times n},\quad
&D_B &= \diag(\mu_1,\ldots,\mu_n).
\end{align}
Thus, columns $i$ of $V_A$ and $V_B$ are eigenvectors of $A$ and $B$
with eigenvalue $\lambda_i$ and $\mu_i$, respectively. We then have
\[
D_A (V_A^{-1} X V_B) - (V_A^{-1} X V_B) D_B = V_A^{-1} C V_B,
\]
and by defining
\begin{align}
\widetilde X &= V_A^{-1} X V_B, \label{Xtilde-def:eq}\\
\widetilde C &= V_A^{-1} C V_B, \label{Ctilde-def:eq}
\end{align}
we can reformulate~\eqref{sylv:eq} as the Sylvester equation
\begin{equation}
\label{sylv-diagonal:eq}
D_A \widetilde X - \widetilde X D_B = \widetilde C,
\end{equation}
which has diagonal coefficients. In particular,
\[
(D_A \widetilde X - \widetilde X D_B)_{ij} = (\lambda_i - \mu_j) \widetilde X_{ij},
\]
and as long as the spectra of $A$ and $B$ have an empty intersection, the elements of $\widetilde X$ can
\begin{equation} \label{sylv-diagonal-entries:eq}
\widetilde X_{ij} = \frac{\widetilde C_{ij}}{\lambda_i - \mu_j}.
\end{equation}

\ignore{Let $\lambda$ and $\mu$ denote the $m \times 1$ and the $n \times 1$ vectors containing the eigenvalues of $A$ and $B$, respectively. Let also $e_k$ denote the $k \times 1$ vector of all-ones. In addition, we define the full $m \times n$ matrix $D$ as
\[
D = \lambda e_n^T - e_m^T \mu
\]
and $R = 1\oslash D$ as the entywise division of $1$ by entries of $D$. More precisely, as}

Therefore, we can write the matrix $\widetilde X$ as the Hadamard product
\[
\widetilde X = \widetilde C \circ R,
\]
where $R$ is the Cauchy matrix with entries
\begin{equation}\label{rij:eq}
R_{ij} = \frac{1}{\lambda_i - \mu_j}.
\end{equation}
Finally,
\begin{equation}
\label{X-spectral-dec:eq}
X = V_A \big( \widetilde C \circ R \big) V_B^{-1}.
\end{equation}
Using part (i) of \cref{basic-sv-ineq:lem}, we have therefore proved the following result.

\begin{proposition}
\label{diagonalizable-sylv:prop}
Let $A\in\mathbb \R^{m\times m}$ and $B\in\mathbb \R^{n\times n}$ be diagonalizable, with eigendecompositions~\eqref{specdecA:eq} and \eqref{specdecB:eq}, respectively.
Assume that $\lambda_i\neq \mu_j$ for all $i,j$, and define $\widetilde C$ and $\widetilde R$ as in~\eqref{Ctilde-def:eq} and~\eqref{rij:eq}, respectively. Then the unique solution to~\eqref{sylv:eq} is
\begin{equation*}
X=V_A(\widetilde C\circ R)V_B^{-1},
\end{equation*}
where $\circ$ denotes the Hadamard product. If $X$ has full rank, then
\begin{equation}\label{eq:statement}
\frac{\kappa_2(\widetilde C\circ R)}
{\kappa_2(V_A)\kappa_2(V_B)}
\le
\kappa_2(X)
\le
\kappa_2(V_A)\kappa_2(V_B)
\kappa_2(\widetilde C\circ R).
\end{equation}
\end{proposition}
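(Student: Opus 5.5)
The representation of the solution has already been derived in the discussion preceding the statement: diagonalizing gives the equation~\eqref{sylv-diagonal:eq} with diagonal coefficients, whose entries are given by~\eqref{sylv-diagonal-entries:eq}, and undoing the change of variables~\eqref{Xtilde-def:eq} yields~\eqref{X-spectral-dec:eq}. Uniqueness follows from $\lambda_i\neq\mu_j$, which makes every denominator in~\eqref{rij:eq} nonzero. It also makes $\mathcal{A}$ nonsingular. So the only thing left to prove is the two-sided bound~\eqref{eq:statement}. We write $Y=\widetilde C\circ R$, so that $X=V_AYV_B^{-1}$.

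For the upper bound, I will apply part~(i) of \cref{basic-sv-ineq:lem} twice. This is allowed because in each product at least one factor ($V_A$ or $V_B^{-1}$) is square and nonsingular. Using~\eqref{max-sv-prod-bounds:eq} gives
\[
\sigma_{\max}(X)\le\sigma_{\max}(V_A)\,\sigma_{\max}(Y)\,\sigma_{\max}(V_B^{-1}).
\]
For the smallest singular value, I first handle rank. Since $V_A$ and $V_B$ are invertible, $\rank X=\rank Y$, so $X$ has full rank exactly when $Y$ does. In that case $\kappa_2$ is the ratio of the extreme singular values, with $\sigma_{\min}$ the $n$th singular value (recall $m\ge n$). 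The left inequality in~\eqref{min-sv-prod-bounds:eq} then gives
\[
\sigma_{\min}(X)\ge\sigma_{\min}(V_A)\,\sigma_{\min}(Y)\,\sigma_{\min}(V_B^{-1}).
\]
Dividing the two displays, and using $\sigma_{\max}(V_B^{-1})/\sigma_{\min}(V_B^{-1})=\kappa_2(V_B)$, gives $\kappa_2(X)\le\kappa_2(V_A)\kappa_2(V_B)\kappa_2(Y)$.

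For the lower bound, I will apply exactly the same argument to the inverse relation $Y=V_A^{-1}XV_B$. Both $V_A^{-1}$ and $V_B$ are square and nonsingular, so the same reasoning yields $\kappa_2(Y)\le\kappa_2(V_A^{-1})\kappa_2(V_B)\kappa_2(X)$. Since $\kappa_2(V_A^{-1})=\kappa_2(V_A)$, rearranging gives the left inequality in~\eqref{eq:statement}.

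The main point needing care is the rectangular case $m>n$. There the ``min'' bound in \cref{basic-sv-ineq:lem} must be interpreted with $\sigma_{\min}$ as the $n$th singular value, and the lemma is applied only to products in which one factor is square. I will therefore split each triple product as $(V_AY)V_B^{-1}$ and treat each two-factor product separately. Alternatively, I can check the bound directly: for a unit vector $z\in\C^n$ we have $\|V_AYV_B^{-1}z\|_2\ge\sigma_{\min}(V_A)\,\sigma_n(Y)\,\|V_B^{-1}z\|_2$, and this gives the required inequality for $\sigma_n(X)$. A minor remark is that the factors are complex, while \cref{basic-sv-ineq:lem} is stated for real matrices. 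The singular value inequalities hold verbatim over $\C$, so this causes no difficulty.
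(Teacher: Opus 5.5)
Your proof is correct and follows the paper's argument. The paper applies \cref{basic-sv-ineq:lem}(i) to $X=V_A(\widetilde C\circ R)V_B^{-1}$ to bound both $\sigma_{\min}(X)$ and $\sigma_{\max}(X)$ from above and below, and reads off both inequalities in~\eqref{eq:statement}. Your only deviation is in the left inequality: you rerun the upper-bound argument on $\widetilde C\circ R=V_A^{-1}XV_B$ instead of using the ``mixed'' inequalities $\sigma_{\min}(MN)\le\sigma_{\max}(M)\sigma_{\min}(N)$ and $\sigma_{\min}(M)\sigma_{\max}(N)\le\sigma_{\max}(MN)$, which is an equivalent shortcut.
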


\begin{proof}
By~\eqref{min-sv-prod-bounds:eq} and~\eqref{max-sv-prod-bounds:eq}, respectively, we immediately obtain
\begin{align*}
\frac{\sigma_{\min}(V_A)}{\sigma_{\max}(V_B)} \sigma_{\min}(\widetilde C \circ R)
\le
\sigma_{\min}(X)
\le
\frac{\sigma_{\max}(V_A)}{\sigma_{\min}(V_B)} \sigma_{\min}(\widetilde C \circ R),\\
\frac{\sigma_{\min}(V_A)}{\sigma_{\max}(V_B)} \sigma_{\max}(\widetilde C \circ R)
\le
\sigma_{\max}(X) \le
\frac{\sigma_{\max}(V_A)}{\sigma_{\min}(V_B)} \sigma_{\max}(\widetilde C \circ R),
\end{align*}
which immediately yields~\eqref{eq:statement}.
\end{proof}

If $A$ and $B$ are normal, then $V_A$ and $V_B$ are unitary and the result simplifies further.
\begin{corollary}
\label{kappa-lyap-normal:cor}
If $A$ and $B$ are normal, then
\[
\sigma_{\min}(X) =
\sigma_{\min}(\widetilde C \circ R), \quad \sigma_{\max}(X) =
\sigma_{\max}(\widetilde C \circ R), \quad
\kappa_2(X) = \kappa_2(\widetilde C \circ R).
\]
\end{corollary}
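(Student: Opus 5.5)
The plan is to exploit the fact that, for normal $A$ and $B$, the eigenvector matrices in the spectral decompositions~\eqref{specdecA:eq} and~\eqref{specdecB:eq} can be chosen unitary. By the spectral theorem for normal matrices, every normal matrix is unitarily diagonalizable, so we may take $V_A = U_A$ and $V_B = U_B$ with $U_A^*U_A = I_m$ and $U_B^*U_B = I_n$. The argument then reduces to unitary invariance of singular values, and should not require the inequalities of \cref{basic-sv-ineq:lem} at all.

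The key steps, in order, are as follows.

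\begin{enumerate}
\item Invoke \cref{diagonalizable-sylv:prop} with this unitary choice of eigenvectors. This gives $X = U_A(\widetilde C\circ R)U_B^{-1} = U_A(\widetilde C\circ R)U_B^*$, where $\widetilde C = U_A^* C U_B$ as in~\eqref{Ctilde-def:eq}.
\item Observe that multiplying a matrix on the left and on the right by unitary matrices leaves its singular values unchanged. This holds because $X^*X = U_B(\widetilde C\circ R)^*(\widetilde C\circ R)U_B^*$ is unitarily similar to $(\widetilde C\circ R)^*(\widetilde C\circ R)$, and hence has the same eigenvalues.
\item Conclude that $X$ and $\widetilde C\circ R$ have identical singular values, including their rank. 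In particular, $\sigma_{\min}(X) = \sigma_{\min}(\widetilde C\circ R)$ and $\sigma_{\max}(X) = \sigma_{\max}(\widetilde C\circ R)$, and taking the ratio gives $\kappa_2(X) = \kappa_2(\widetilde C\circ R)$.
\end{enumerate}

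As an alternative route, one could specialize~\eqref{eq:statement} directly: since $\kappa_2(V_A) = \kappa_2(V_B) = 1$, the two-sided bounds collapse to equalities. The same collapse happens in the singular-value inequalities in the proof of \cref{diagonalizable-sylv:prop}, because there $\sigma_{\min}(V_A) = \sigma_{\max}(V_A) = 1$, and likewise for $V_B$.

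There is no substantial obstacle. The only point requiring care is the subtlety that the eigenvector matrices must be chosen unitary. A normal matrix with repeated eigenvalues also admits non-unitary eigenvector matrices, so the corollary should be read with $V_A$ and $V_B$ taken unitary. Notice also that the singular values remain real even though $\widetilde C$ and $R$ may be complex, so nothing changes when working over $\mathbb{C}$.
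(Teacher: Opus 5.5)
Your proposal is correct and is essentially the paper's argument. The paper simply observes that for normal $A$ and $B$ the eigenvector matrices $V_A$, $V_B$ are unitary, so the two-sided singular-value bounds in the proof of \cref{diagonalizable-sylv:prop} collapse to equalities. That is your alternative route, and it is the same fact as the unitary invariance of singular values that you use directly.

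Your caveat that $V_A$ and $V_B$ must be chosen unitary is well placed. It matters even when the eigenvalues are distinct: rescaling columns already produces non-unitary eigenvector matrices, and this can change the singular values of $\widetilde C\circ R = V_A^{-1}XV_B$.
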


In general, for every matrix $\widetilde C$ and $R$, we have~\cite[Thm.~5.5.1]{hojo91}
\begin{equation}
\label{max-sv-hadamard:eq}
\sigma_{\max}(\widetilde C\circ R)
\le
\sigma_{\max}(\widetilde C) \, \sigma_{\max}(R)
\end{equation}
although this bound may be a substantial overestimation.\footnote{See~\cite[Thm. 5.5.3]{hojo91} for smaller upper bounds, which may nevertheless be pessimistic.}

The following result gives a sufficient condition under which the solution of the Lyapunov equation $AX+XA^T=-C$, with $A$ symmetric positive definite and $C$ symmetric negative definite, is
well conditioned. In particular, both $A$ and $C$ being well conditioned is
sufficient to guarantee good conditioning of $X$. However, this condition is not
necessary: the solution can be well conditioned even when $A$ and $C$ are moderately
ill conditioned, as shown in \cref{spd-A-C-lyap:ex}.
\begin{theorem} \label{spd-A-C-lyap:thm}
If $A$ is symmetric positive definite and $C$ is symmetric negative definite, then the unique solution to the Lyapunov equation $AX+XA^T = -C$ is symmetric positive definite and satisfies
\begin{equation}
\label{spd-A-C-lyap-bound:eq}
1 \le \ \kappa_2(X)
\ \le \
2 \kappa(C) \, \lambda_{\max}(A)\, \max_{i} \sum_{j=1}^n \frac{1}{\lambda_i + \lambda_j}
\ \le \
n\, \kappa_2(A)\, \kappa_2(C),
\end{equation}
where $\lambda_i$ denotes the $i$-th eigenvalue of $A$.
\end{theorem}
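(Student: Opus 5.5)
The plan is to diagonalize $A$ orthogonally and reduce everything to a Hadamard product of two symmetric positive definite matrices, to which \cref{kappa-lyap-normal:cor} applies. Write $P = -C$, which is symmetric positive definite, so the equation reads $AX + XA = P$ (note $A^T = A$). Since $A$ is positive definite, $A$ and $-A$ have no common eigenvalue, so the solution is unique. Moreover
\[
X = \int_0^\infty e^{-At} P e^{-At}\,dt
\]
is convergent and symmetric positive definite, because $v^TXv = \int_0^\infty \norm{P^{1/2}e^{-At}v}_2^2\,dt > 0$ for $v \neq 0$. This settles the first claim, and the lower bound $1 \le \kappa_2(X)$ is trivial.

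For the upper bound, I use $A = Q\Lambda Q^T$ with $Q$ orthogonal and $\Lambda = \diag(\lambda_1,\dots,\lambda_n)$, $\lambda_i > 0$. Here $B = -A^T$ has eigenvalues $\mu_j = -\lambda_j$, so the construction of \cref{sec:diagonalizable} gives $\widetilde X = \widetilde P \circ R$ with $\widetilde P = Q^TPQ$ and $R_{ij} = 1/(\lambda_i+\lambda_j)$. By \cref{kappa-lyap-normal:cor}, $\kappa_2(X) = \kappa_2(\widetilde P \circ R)$, and $\kappa_2(\widetilde P) = \kappa_2(C)$.

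The key structural fact is that the Cauchy matrix $R$ is positive semidefinite, since $R_{ij} = \int_0^\infty e^{-\lambda_i t}e^{-\lambda_j t}\,dt$ is a Gram matrix. I then bound the extreme eigenvalues of the symmetric positive definite matrix $\widetilde P\circ R$ using Schur-product facts.
\begin{itemize}
\item For the smallest eigenvalue, I use the Schur-type inequality $\lambda_{\min}(\widetilde P \circ R) \ge \lambda_{\min}(\widetilde P)\min_i R_{ii}$. It follows by writing $\widetilde P \circ R = (\widetilde P - \lambda_{\min}(\widetilde P) I)\circ R + \lambda_{\min}(\widetilde P)\,(I\circ R)$ and noting that the first term is positive semidefinite by the Schur product theorem. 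Since $\min_i R_{ii} = 1/(2\lambda_{\max}(A))$, this gives $\lambda_{\min}(\widetilde P\circ R) \ge \lambda_{\min}(C)/(2\lambda_{\max}(A))$, in magnitude.
\item For the largest eigenvalue, I use $\lambda_{\max}(\widetilde P\circ R) \le \lambda_{\max}(\widetilde P)\lambda_{\max}(R)$, which follows from \eqref{max-sv-hadamard:eq}. Since all entries of $R$ are positive, Gershgorin's theorem gives $\lambda_{\max}(R) \le \max_i \sum_j 1/(\lambda_i+\lambda_j)$.
\end{itemize}
Dividing the two bounds yields the middle bound $2\kappa(C)\lambda_{\max}(A)\max_i\sum_j 1/(\lambda_i+\lambda_j)$.

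The final inequality is elementary: each term satisfies $1/(\lambda_i+\lambda_j) \le 1/(2\lambda_{\min}(A))$, so the maximal row sum is at most $n/(2\lambda_{\min}(A))$. This gives $n\,\kappa_2(A)\kappa_2(C)$.

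The main obstacle is the lower bound on $\lambda_{\min}$ of the Hadamard product. A naive inequality like $\sigma_{\min}(\widetilde P\circ R)\ge\sigma_{\min}(\widetilde P)\sigma_{\min}(R)$ would bring in the tiny $\lambda_{\min}$ of the Cauchy matrix, which is why the argument needs the diagonal-based Schur bound and hence the positive semidefiniteness of $R$.
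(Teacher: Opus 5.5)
Your proposal is correct and follows essentially the same route as the paper: orthogonal diagonalization to $\widetilde X=\widetilde P\circ R$ with the positive semidefinite Cauchy matrix $R_{ij}=1/(\lambda_i+\lambda_j)$, the Schur bound $\lambda_{\min}(\widetilde P\circ R)\ge\lambda_{\min}(\widetilde P)\min_i R_{ii}$ (which the paper cites and you prove directly), the Hadamard bound $\norm{\widetilde P\circ R}_2\le\norm{\widetilde P}_2\norm{R}_2$, and a row-sum bound on $\norm{R}_2$. The differences are minor: you get positive definiteness of $X$ from the integral representation rather than from the Schur product theorem, and you bound $\norm{R}_2$ by Gershgorin rather than via $\sqrt{\norm{R}_1\norm{R}_\infty}$, which makes explicit that this step is only an inequality (the paper writes it as a chain of equalities, which is not exact in general).
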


\begin{proof}
    Let $A=:QDQ^T$ be the spectral decomposition of $A$. Then the Lyapunov equation $AX + XA^T = -C$ is equivalent to
    \begin{equation*}
        \Lambda Y + Y \Lambda = \widetilde C,\qquad \widetilde C = -Q^TCQ,\qquad Y = Q^TXQ,
    \end{equation*}
    where the solution $Y$ can be written as
    \begin{equation*}
        Y = \widetilde C \circ R, \qquad R_{ij} = \frac{\widetilde C_{ij}}{\lambda_i + \lambda_j}.
    \end{equation*}
    $C$ is symmetric negative definite, thus $\widetilde C$ is symmetric positive definite. Since $R$ is symmetric positive semidefinite with nonzero diagonal entries, $Y$ is symmetric positive definite by~\cite[Thm. 5.2.1]{hojo91}.
    Since $Q$ is orthogonal, $X$ is also symmetrix positive definite and
    \begin{equation*}
        \kappa_2(X) = \kappa_2(Y) = \frac{\lambda_{\max}(Y)}{\lambda_{\min}(Y)}.
    \end{equation*}
    Using the inequalities on~\cite[p.~312]{hojo91} and \cite[Thm.~5.3.4]{hojo91}, we can easily show that
    \begin{align*}
        \lambda_{\max}(Y)
        &= \norm{Y}_2
        = \norm{\widetilde C \circ R}_2 \le \norm{\widetilde C}_2 \norm{R}_2
        = \lambda_{\max}(C) \norm{R}_2,\\
        \lambda_{\min}(Y)
        &= \lambda_{\min}(\widetilde C \circ R)
        \ge \lambda_{\min}(\widetilde C) \min_i R_{ii}
        =\lambda_{\min}(\widetilde C) \min_i\frac{1}{2\lambda_i}
        = \frac{\lambda_{\min}(C)}{2\lambda_{\max}(A)}.
    \end{align*}
    and therefore that
    \begin{equation}\label{eq:partial-bound}
        \kappa_2(X) \le 2 \kappa_2(C) \lambda_{\max}(A) \norm{R}_2.
    \end{equation}
    To bound $\norm{R}_2$, note that $R$ is positive semidefinite and therefore
    \begin{equation}\label{eq:norm-R-bound}
        \norm{R}_{2}
        = \sqrt{\norm{R}_1 \norm{R}_{\infty}}
        = \norm{R}_{\infty}
        = \max_{1 \le i \le n}\sum_{j = 1}^{n} \frac{1}{\lambda_i + \lambda_j}
        \le \sum_{j=1}^n \frac{1}{2 \lambda_{\min}(A)} = \frac{n}{2 \lambda_{\min}(A)},
    \end{equation}
    where the second equality hold because $R$ is symmetric and the third because it is positive semidefinite.
    Combining~\eqref{eq:partial-bound} and~\eqref{eq:norm-R-bound} gives~\eqref{spd-A-C-lyap-bound:eq}.
\end{proof}

\subsection{Lower bounds in terms of Zolotarev numbers}
\label{zolotarev:sec}
We next use a result of Beckermann and Townsend~\cite{beto17, beto19} to develop lower bounds for $\kappa_2 (X)$ in terms of Zolotarev numbers when the coefficient matrices are normal, rather than merely diagonalizable.

\begin{theorem} \label{Zol-lower-bound-normal-thm}
Let $A \in \mathbb{C}^{m \times m}$ and $B\in \mathbb{C}^{n \times n}$ be normal matrices with $m \geq n$ and let $E$ and $F$ be complex sets such that the spectrum of $A$ is a subset of $E$ and the spectrum of $B$ is a subset of $F$. Suppose that the matrix $X\in \mathbb{C}^{m \times n}$, of rank $r$, satisfies
\begin{equation}
\label{sylv-decomposedRHS:eq}
AX-XB = MN^\ast, \qquad M \in \mathbb{C}^{m \times \nu}, \quad N \in \mathbb{C}^{n \times \nu},
\end{equation}
where $1 \le \nu \le n$ is an integer. Then,
for integers $0\le k \le \frac{n-1}{\nu}$, we have
\[
\frac{\sigma_1(X)}{\sigma_{1+\nu k}(X)} \ge \bigl(Z_k(E, F)\bigr)^{-1},
\]
where
\[
Z_k(E, F) := \inf_{\widetilde r \in \mathcal{R}_{k, k}} \frac{\sup_{z \in E} |\widetilde r(z)|}{\inf_{z \in F} |\widetilde r(z)|}
\]
is the Zolotarev number where $\mathcal{R}_{k, k}$ is the set of irreducible rational functions of the form $p(x)/q(x)$ and $p$ and $q$ are polynomials of degree at most $k$.
In particular,
\begin{equation}\label{Zol-lower-bound-normal-thm:eq}
\kappa_2(X) \ge \bigl(Z_\ell(E, F)\bigr)^{-1},\qquad \ell = \Bigl\lfloor \frac{r-1}{\nu} \Bigr\rfloor
\end{equation}
\end{theorem}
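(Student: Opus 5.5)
This is the Beckermann--Townsend argument. For a rational function $\widetilde r = p/q\in\mathcal R_{k,k}$ with no poles on the spectrum of $B$ and no zeros on the spectrum of $A$, I want to show that
\[
\widetilde r(A)\,X - X\,\widetilde r(B)
\]
has rank at most $\nu k$.

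\textbf{Step 1: low rank of the displacement.} Start from $AX - XB = MN^\ast$. By induction, $A^jX - XB^j = \sum_{i=0}^{j-1} A^i MN^\ast B^{j-1-i}$. Its column space lies in the block Krylov space $\mathcal K_j(A,M)$, which has dimension at most $\nu j$. Hence $p(A)X - Xp(B)$ has rank at most $\nu k$, with column space in $\mathcal K_k(A,M)$.

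For the rational case, write $\widetilde r = p/q$ and put $Y = q(A)^{-1}Xq(B)$, assuming $q$ has no zeros on the spectrum of $A$ (perturb the poles slightly if needed). The identity $AY - YB = q(A)^{-1}MN^\ast q(B)$ still has rank $\le \nu$. Then
\[
\widetilde r(A)X - X\widetilde r(B) = q(A)^{-1}\bigl(p(A)Xq(B) - q(A)Xp(B)\bigr)q(B)^{-1},
\]
and $p(A)Xq(B) - q(A)Xp(B)$ is a sum of terms $a_i b_j\,(A^iXB^j - A^jXB^i)$. Each such term factors as $A^{\min}(A^{d}X - XB^{d})B^{\min}$, where the exponents are $i,j$ reordered and $d=|i-j|$. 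All column spaces lie in $\mathcal K_k(A,M)$, so the total rank is $\le \nu k$.

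The cleaner route, which I would actually write, is the BT argument: $X - \widetilde r(A)^{-1}X\widetilde r(B)$ has rank $\le \nu k$ (Beckermann--Townsend 2017, Thm 2.1). I will cite it, with the above sketch as justification.

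\textbf{Step 2: approximation bound.} Set $\Delta = \widetilde r(A)^{-1}X\,\widetilde r(B)$. Then $X - \Delta$ has rank $\le \nu k$, so by Eckart--Young
\[
\sigma_{1+\nu k}(X) \le \|\Delta\|_2 \le \|\widetilde r(A)^{-1}\|_2\,\|X\|_2\,\|\widetilde r(B)\|_2 .
\]
Normality is used here: $\|\widetilde r(B)\|_2 = \max_{\mu\in\Lambda(B)}|\widetilde r(\mu)|$ and $\|\widetilde r(A)^{-1}\|_2 = \max_{\lambda\in\Lambda(A)}|\widetilde r(\lambda)|^{-1}$.

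Since $\Lambda(B)\subseteq F$ and $\Lambda(A)\subseteq E$, this is at most $\sup_F|\widetilde r| / \inf_E|\widetilde r|$. Taking the infimum over $\widetilde r$ gives a Zolotarev number with the sets swapped relative to the statement. Swapping is harmless, since replacing $\widetilde r$ by $1/\widetilde r$ shows $Z_k(E,F)=Z_k(F,E)$. This yields $\sigma_{1+\nu k}(X)\le Z_k(E,F)\,\sigma_1(X)$, which is the claim.

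Edge cases need a limiting argument. If $\widetilde r$ has poles or zeros on the spectra, the ratio is infinite or can be approached by perturbation, so such $\widetilde r$ do not affect the infimum. The case $k=0$ is trivial, since $Z_0=1$.

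\textbf{Step 3: condition number.} Take $\ell = \lfloor (r-1)/\nu\rfloor$, so that $1+\nu\ell\le r$ and $\sigma_{1+\nu\ell}(X)\ge\sigma_r(X)>0$. Also $\ell\le (n-1)/\nu$ because $r\le n$. Then
\[
\kappa_2(X) = \frac{\sigma_1}{\sigma_r} \ge \frac{\sigma_1}{\sigma_{1+\nu\ell}} \ge Z_\ell(E,F)^{-1}.
\]
The main obstacle is Step 1's rank bound for genuinely rational $\widetilde r$; I would rely on the factorization identity and the Krylov-space containment.
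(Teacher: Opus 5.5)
Your proposal is correct and follows the paper's route. The paper invokes Beckermann--Townsend's Theorem~2.1 to get $\sigma_{1+\nu k}(X)\le Z_k(E,F)\,\sigma_1(X)$ and then argues exactly as in your Step~3, while your Steps~1--2 unpack that cited theorem: a rational displacement of rank at most $\nu k$, Eckart--Young, and normality to evaluate $\widetilde r$ on the spectra. You can also drop the pole-perturbation argument: with $L = p(A)Xq(B)-q(A)Xp(B)$ one has $X - p(A)^{-1}q(A)\,X\,p(B)q(B)^{-1} = p(A)^{-1}L\,q(B)^{-1}$, which needs only $p(A)$ and $q(B)$ invertible, and these are precisely the cases in which $\sup_F|\widetilde r|/\inf_E|\widetilde r|$ is finite.
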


\begin{proof}
This is a direct consequence of~\cite[Thm.~2.1]{beto19}, if we take $\ell$ such that $r = \rank(X) \ge 1 + \nu \ell$. Hence, $\sigma_{r} \le \sigma_{1 + \nu \ell}$, which is then bounded from above by $Z_\ell(E, F) \sigma_{1}(X)$.
\end{proof}

The displacement rank of $X$ with respect to $A$ and $B$ is $\rank(AX-XB)$. Therefore, from~\eqref{sylv-decomposedRHS:eq}, $X$ has displacement rank at most $\nu$. The ratio $\ell = \lfloor \frac{r-1}{\nu}\rfloor$ is then the largest admissible rational degree for which $1+\nu \ell \le r$ hence $\sigma_{1+\nu \ell}$ is still among the potentially nonzero singular values of $X$.

Zolotarev numbers have been extensively studied in the literature; see~\cite{gptv15,nafr16} for instance. For certain choices of $E$ and $F$, in particular, explicit bounds on $Z_\ell(E, F)$ are known. Below, we list three such bounds, which apply when $E$ and $F$ being real symmetric intervals, general real intervals, and disks, respectively.

\begin{enumerate}[label=C\arabic*., ref=C\arabic*]
\item \label{it:c1} Let $0 < a < b < \infty$. Then
\begin{equation}
\label{Zol-bnd-ring-fun:eq}
\frac{4 \rho^{-2\ell}}{(1+\rho^{-4 \ell})^4} \le
Z_\ell\bigl( [-b, -a], [a, b] \bigr) \le
\frac{4 \rho^{-2\ell}}{(1+\rho^{-4\ell})^2}
\le 4 \rho^{-2\ell},
\end{equation}
with
\begin{equation*}
    \rho = \exp \Big(\frac{\pi^2}{2 \mu(a/b)} \Big),
\end{equation*}
where $\mu(\cdot)$ is the Gr\"otzsch ring function; see~\cite[Cor.~3.2]{beto19}. The simpler but looser bound is
\[
Z_\ell\bigl( [-b, -a], [a, b] \bigr) \le 4 \biggl[ \exp \Big( \frac{\pi^{2}}{2 \log (4b/a)} \Big) \biggr]^{-2\ell}.
\]

\item \label{it:c2} Consider the Zolotarev numbers $Z_\ell\big( [a, b], [c, d] \big)$, where either $b < c$ or $d<a$, so that $[a, b] \cap [c, d] = \emptyset$. Then,
\[
Z_\ell \bigl( [a, b], [c, d] \bigr)
\le
4 \biggl[ \exp \Bigl( \frac{\pi^{2}}{2 \mu (1/\alpha)} \Bigr) \biggr]^{-2\ell}
\le
4 \biggl[ \exp \Bigl( \frac{\pi^{2}}{2 \log (16 \gamma)} \Bigr) \biggr]^{-2\ell},
\]
with
\[
\alpha = -1 + 2\gamma + 2 \sqrt{\gamma^2 - \gamma}, \qquad \gamma = \frac{|c-a| |d-b|}{|c-b| |d-a|},
\]
in which $\gamma$ is the cross-ratio of the two spectra.

\item \label{it:c3} Explicit formulae are also available for certain configurations of $E$ and $F$ involving disks~\cite[sec.~3.3]{beto19}. Let either $E$ or $F$ be the closed disk centered at $c$ with radius $r_1$, and let the other set be the exterior of the concentric open disk of radius $r_2 > r_1$. Then,
\[
Z_\ell\bigl( \{ z \in \mathbb{C} \ :\ |z-c| \le r_1 \}, \{ z \in \mathbb{C}\ :\ |z-c| \ge r_2\} \bigr)
= \Bigl(\frac{r_1}{r_2}\Bigr)^\ell.
\]

In addition, if $E = \bigl\{ z \in \mathbb{C} \ :\ |z-\frac{a+b}{2}| \le \frac{b-a}{2} \bigr\}$ is a disk whose diameter is the real interval $[a, b]$ and $-E$ is the mirror image with diameter $[-b, -a]$, then
\[
Z_\ell(-E, E)
= \biggl( \frac{1 - \sqrt{a/b}}{1+\sqrt{a/b}} \biggr)^{2\ell}.
\]
The above formulae are applicable also when either $E$ or $F$ is a half-plane.
\end{enumerate}

We note that if $r \le \nu$, then $\ell = 0$, and for the three special cases~\ref{it:c1}--\ref{it:c3}, \cref{Zol-lower-bound-normal-thm} only gives the trivial lower bound $\kappa_2(X) \ge 1$.

On the other hand, in all three special cases, the lower bound on $\kappa_2(X)$ grows exponentially with $\ell$ between the rank $r$ of $X$ and its displacement rank $\nu$. Thus, when $r$ is (moderately) large {\em relative to} $\nu$, every solution $X$ satisfying the hypotheses is unavoidably ill-conditioned. This corresponds to problems arising in important applications in which the right-hand side $C$ has low rank; see~\cite{bes15, krto10, simo16} and the references therein.\footnote{As an extreme example, if $A$ is Hurwitz stable and $(A, b)$ is controllable, then $A X+X A^T = -b b^T$ has a symmetric positive definite solution. Thus, a rank-1 right-hand side may give a rank-$n$ solution. See~\cite[Thm.~4.15 and 4.18(a), and Prop.~4.27]{anto05} for instance.}

In particular, for \ref{it:c1}, we have
\begin{equation}
\label{kappa-Zol-bnd-ring-fun:eq}
\kappa_2(X) \ge \frac{1}{4} \rho^{2\ell}
\end{equation}
and the strength of the bound depends on the ratio $\lambda = b/a$ appearing in $\rho$, not merely on the fact that the intervals are disjoint.

Similarly, for \ref{it:c2}, we have
\[
\kappa_2(X) \ge \frac{1}{4} \biggl[ \exp \Big( \frac{\pi^{2} \ell}{\log (16 \gamma)} \Big) \biggr],
\]
showing that $X$ is ill-conditioned whenever $\ell$ is sufficiently large relative to
$\log(16\gamma)$ which depends on the cross-ratio of the two spectra.

Finally, for \ref{it:c3}, we see that
\[
\kappa_2(X) \ge \Bigl(\frac{r_2}{r_1}\Bigr)^\ell
\]
and so the solution is ill-conditioned, when $(\frac{r_2}{r_1})^\ell$ is large.

The following result establishes a connection between \cref{diagonalizable-sylv:prop} and \cref{Zol-lower-bound-normal-thm}.
\begin{remark}
If $A$ and $B$ are diagonalizable, but not necessarily normal, then following~\eqref{sylv-diagonal:eq}, the Sylvester equation~\eqref{sylv-decomposedRHS:eq} can be reformulated as
\[
D_A \widetilde X - \widetilde X D_B = \widetilde M \widetilde N^{\ast}, \qquad \widetilde M = V_A^{-1}M, \quad \widetilde N = V_B^{\ast} N,
\]
whose coefficient matrices $D_A$ and $D_B$ are normal. Hence, \cref{Zol-lower-bound-normal-thm} applies to this transformed equation. Moreover, the value of $\ell$ remains unchanged, since the displacement rank of $\widetilde X$ and $X$ coincide, as follows from $\rank(\widetilde M \widetilde N^{\ast}) = \rank(M N^{\ast})$. Then, combining the resulting bound, for instance in Case 1, with the first inequality in \cref{diagonalizable-sylv:prop}, we obtain
\[
\kappa_2(X) \ge \max \Bigg\{ \frac{\exp \Big( \frac{\pi^{2} \ell}{ \log (4b/a)} \Big)}{4 \kappa_2(V_A) \kappa_2(V_B)}, 1 \Bigg\}.
\]
Analogous lower bounds can be derived for Cases~2 and~3.
\end{remark}

\section{Bounds using integral representation} \label{integral-rep:sec}

\noindent A straightforward differentiation and integration argument shows that, if the expression
\[
X=\int_{0}^{\infty} e^{-tA}\,C\,e^{+tB}\,dt
\]
exists for all $C$, then it represents the unique solution of~\eqref{sylv:eq}; see~\cite[pp.~318--319]{high02} for instance.

Applying \cref{log-norm:lem} gives
\[
\|X\|_2 \le \int_{0}^{\infty} \|e^{-At}\|\,\|C\|\,\|e^{Bt}\|\,dt
\le \|C\|\ \int_{0}^{\infty} e^{t\mu(-A) + t\mu(B)} \,dt
\]
If $\mu(-A) + \mu(B) < 0$, then the integral converges to $\frac{-1}{\mu(-A) + \mu(B)}$
giving
\begin{equation}
\label{sol-norm-upper-bound-mu:eq}
\|X\|_2 \le \frac{\|C\|_2}{|\mu(-A) + \mu(B)|} \le \frac{\|C\|_2}{\sep_2(A,B)},
\end{equation}
where the last step follows directly from \cref{sol-log-sep:lem}.

Let $B=-A^T$, i.e., consider the Lypaunov equation~\eqref{lyap:eq-intro}. It follows that if $A$ is (Hurwitz) stable, i.e., all its eigenvalues have negative real parts and $C$ is symmetric positive definite, then
\begin{equation}
\label{eq:lyap-int}
X= \int_{0}^{\infty} e^{t A}\,C\,e^{t A^T}\,dt
\end{equation}
is the unique symmetric positive definite solution of~\eqref{lyap:eq-intro}.

In the case of \cref{amazing:ex},
\ignore{
\[
X=\int_{0}^{\infty} e^{\begin{pmatrix} 0 & -1\\ -1 & 0 \end{pmatrix}t}\,C\,e^{\begin{pmatrix} 0 & -2\\-2 & 0 \end{pmatrix}t}\,dt
\]
Note that
}
\[
\|e^{-tA}\| = e^t, \quad \mbox{ and } \quad \|e^{tB}\| = e^{2t}.
\]
Let $\epsilon=10^{-10}$. Then at $t=5$ and $t=10$, the norm of the integrand is $4.9 \times 10^{6}$ and $1.6 \times 10^{13}$, respectively. This could be a hint that we may need to account for the transient growth in the hump of matrix exponentials. Indeed, there is a connection between the problem of identifying conditions under which $\kappa(X)$ is modest, and the hump of matrix exponential. It is known that
\begin{equation}
\label{eq:kappa-lower-bound}
\kappa(X) \ge \| e^{t A} \|^2 \, e^{t /\|X\|}, \quad t \ge 0
\end{equation}
where all norms are spectral norms, and $X$ is the symmetric positive definite solution of the Lyapunov equation
\begin{equation}
\label{lyap-eye:eq}
AX + X A^T = -I.
\end{equation}
See~\cite{Godunov90},~\cite[Thm.~2]{vese97} and~\cite[p.~147]{trem09} for a proof.

Here is another lower bound on the condition number of the solution to the Lyapunov equation.
\begin{theorem}\label{thm:xu-trace} \cite{vese98,xu97}
Let $A\in \R^{n\times n}$ be Hurwitz stable, let $C$ be symmetric positive definite, and $X$ be the symmetric positive definite solution of $AX+XA^T = -C$. Then, for all $t\ge0$,
\begin{equation}
\label{kappa-lower-bound-trace:eq}
\kappa_2(X) \ge
        \frac{\Tr(e^{tA}\ e^{t A^T })}{\Tr\bigl(e^{-tC/\|X\|_2}\bigr)}.
\end{equation}
For the special case $C=I$, the following bounds hold
\begin{equation}
\label{sol-inverse-lower-bound-trace-special-case:eq}
\|X^{-1}\|_2 \ge
        \frac{\Tr(e^{tA}\ e^{t A^T })}{\Tr(X)}\, e^{t/\|X\|_2}.
\end{equation}
and
\begin{equation}
\label{kappa-lower-bound-trace-special-case:eq}
\kappa_2(X) \ge
        \frac{\|X\|_2}{\Tr(X)} \Tr(e^{tA}\ e^{t A^T })\, e^{t/\|X\|_2}.
\end{equation}
\end{theorem}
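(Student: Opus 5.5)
The plan is to work in a "balanced" coordinate system in which $X$ becomes the identity. Since $X$ is symmetric positive definite, we set
\[
\widehat A = X^{-1/2} A X^{1/2}.
\]
Then $e^{tA} = X^{1/2} e^{t\widehat A} X^{-1/2}$. Multiplying $AX+XA^T=-C$ on both sides by $X^{-1/2}$ gives
\[
\widehat A + \widehat A^T = -X^{-1/2} C X^{-1/2} =: -\widehat C .
\]
Thus the symmetric part of $\widehat A$ is $-\widehat C/2$, with $\widehat C$ symmetric positive definite. In particular $\mu(\widehat A) = -\tfrac12 \lambda_{\min}(\widehat C)$.

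For \eqref{kappa-lower-bound-trace:eq}, we rewrite
\[
\Tr(e^{tA}e^{tA^T}) = \Tr\bigl(X^{1/2}e^{t\widehat A}X^{-1}e^{t\widehat A^T}X^{1/2}\bigr).
\]
Using $X^{-1}\preceq \|X^{-1}\|_2 I$ and $X \preceq \|X\|_2 I$ inside the trace (cyclicity plus the monotonicity $\Tr(PQ)\le \lambda_{\max}(Q)\Tr(P)$ for $P\succeq 0$), we get
\[
\Tr(e^{tA}e^{tA^T}) \le \kappa_2(X)\,\Tr(e^{t\widehat A}e^{t\widehat A^T}).
\]
Next, the Golden--Thompson-type inequality $\Tr(e^{M}e^{M^T}) \le \Tr(e^{M+M^T})$ (Bernstein, Fact on trace of exponentials) gives
\[
\Tr(e^{t\widehat A}e^{t\widehat A^T}) \le \Tr(e^{-t\widehat C}).
\]
Finally we compare $\widehat C$ with $C/\|X\|_2$. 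The matrix $\widehat C$ is similar to $C^{1/2}X^{-1}C^{1/2}$, and $X^{-1}\succeq I/\|X\|_2$, so $C^{1/2}X^{-1}C^{1/2}\succeq C/\|X\|_2$. By Weyl monotonicity, the ordered eigenvalues satisfy $\lambda_i(\widehat C)\ge \lambda_i(C)/\|X\|_2$. Since $s\mapsto e^{-ts}$ is decreasing, this yields
\[
\Tr(e^{-t\widehat C})\le \Tr(e^{-tC/\|X\|_2}).
\]
Chaining the three inequalities and dividing gives \eqref{kappa-lower-bound-trace:eq}.

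For the special case $C=I$, we split the weights differently. We bound only $X^{-1}\preceq \|X^{-1}\|_2 I$, which gives
\[
\Tr(e^{tA}e^{tA^T}) \le \|X^{-1}\|_2 \Tr\bigl(e^{t\widehat A}e^{t\widehat A^T}X\bigr) \le \|X^{-1}\|_2\, \|e^{t\widehat A}\|_2^2\, \Tr(X).
\]
By \cref{log-norm:lem}, $\|e^{t\widehat A}\|_2^2 \le e^{2t\mu(\widehat A)}$. Here $\widehat C = X^{-1}$, so $2\mu(\widehat A) = -\lambda_{\min}(X^{-1}) = -1/\|X\|_2$. Rearranging gives \eqref{sol-inverse-lower-bound-trace-special-case:eq}. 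Multiplying by $\|X\|_2$ then gives \eqref{kappa-lower-bound-trace-special-case:eq}.

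The main obstacle is the trace step $\Tr(e^{M}e^{M^T})\le\Tr(e^{M+M^T})$. If a clean citation is not at hand, I would prove it via the Lie product formula together with the Araki--Lieb--Thirring / Golden--Thompson inequality for $\Tr\bigl((e^{M/k}e^{M^T/k})^k\bigr)$. The remaining steps are routine Loewner-order and eigenvalue-monotonicity arguments.
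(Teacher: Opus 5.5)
Your proof is correct. The paper does not prove this theorem itself: it takes the general bound from \cite{xu97} and the case $C=I$ from \cite{vese98}. Your argument is therefore a self-contained alternative rather than a reconstruction of a proof in the paper.

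The steps all check. With $\widehat A=X^{-1/2}AX^{1/2}$ you get $\Tr(e^{tA}e^{tA^T})=\Tr(X^{-1}e^{t\widehat A^T}Xe^{t\widehat A})\le\kappa_2(X)\Tr(e^{t\widehat A}e^{t\widehat A^T})$. The symmetric part of $\widehat A$ is $-\widehat C/2$. The comparison $\lambda_i(\widehat C)\ge\lambda_i(C)/\norm{X}_2$ follows from $C^{1/2}X^{-1}C^{1/2}\succeq C/\norm{X}_2$.

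Your route also makes the relation between the bounds transparent. Setting $C=I$ in the general bound only yields the weaker estimate with the factor $1/n$, because there $X\preceq\norm{X}_2 I$ is applied inside the trace. In the special case you keep $\Tr(X)$ and need only the log-norm estimate $\norm{e^{t\widehat A}}_2^2\le e^{-t/\norm{X}_2}$. That special-case argument needs no trace inequality at all. It is equivalent to a Gr\"onwall argument for $f(t)=\Tr(e^{tA}Xe^{tA^T})=\Tr(Xe^{t\widehat A}e^{t\widehat A^T})$, which satisfies $f'(t)=-\Tr(e^{tA}e^{tA^T})\le -f(t)/\norm{X}_2$.

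The one imprecise point is your fallback justification of $\Tr(e^{M}e^{M^T})\le\Tr(e^{M+M^T})$, which is Bernstein's trace inequality (1988) and is true. Golden--Thompson is not the right tool: it concerns symmetric exponents and puts the exponential of the sum on the smaller side. Araki--Lieb--Thirring does not give it directly either. The standard proof runs as follows:
\begin{itemize}
\item For any square $Y$ and integer $k\ge 1$, Horn's inequalities give $\prod_{i\le j}\sigma_i(Y^k)\le\prod_{i\le j}\sigma_i(Y)^k$.
\item These imply the weak majorization $\sum_i\sigma_i(Y^k)^2\le\sum_i\sigma_i(Y)^{2k}$, that is, $\Tr\bigl(Y^k(Y^k)^T\bigr)\le\Tr\bigl((YY^T)^k\bigr)$.
\item Take $Y=e^{M/k}$ and let $k\to\infty$, using the Lie product formula $(e^{M/k}e^{M^T/k})^k\to e^{M+M^T}$.
\end{itemize}
With that justification in place, your proof is complete.
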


The lower bound~\eqref{kappa-lower-bound-trace:eq} is from~\cite[Thm. 4 \& Eq.(10)]{xu97}\footnote{where it is stated for $AX+XA^{\ast}=C$ with $C$ negative definite.}. The particular cases~\eqref{sol-inverse-lower-bound-trace-special-case:eq} and~\eqref{kappa-lower-bound-trace-special-case:eq} can be found in~\cite[Eq.~(9)]{vese98} and the latter is an improvement\footnote{in the sense that $\frac{1}{n} \le \frac{\|X\|_2}{\Tr(X)} = \frac{\lambda_{\max}(X)}{\sum_{i=1}^n \lambda_i(X)} \leq 1$ where the final inequality is strict when $n\ge 2$.} of
\begin{equation}
\label{kappa-lower-bound-trace-special-case-weaker:eq}
\kappa_2(X) \ge
        \frac{1}{n} \Tr(e^{tA}\ e^{t A^T })\, e^{t/\|X\|_2}.
\end{equation}
established in~\cite[Cor. 7]{xu97} for the special case $C=I$.

We now give a set of sufficient conditions under which a solution to the Lyapunov equation is well conditioned. For simplicity, we assume that $A$ and $C$ are real matrices so that $A^\ast = A^T$ which is convenient when dealing with possibly complex eigenvalues and eigenvectors of $A$. Our discussion relies on the Loewner order, denoted by $\preceq$, where $A \preceq B$ if $B - A$ is symmetric positive semi-definite. The following technical lemma shows that integration preserves Loewner order.

\begin{lemma}\label{lem:loew-int}
    Let $F,G:[0, \infty) \to \R^{n\times n}$ be continuous symmetric matrix-valued functions such that $F(t) \preceq G(t)$ for all $t \ge 0$.
    If the improper integrals
    \begin{equation*}
        \int_0^\infty F(t)\,dt,
        \qquad
        \int_0^\infty G(t)\,dt
    \end{equation*}
    exist entrywise, then
    \begin{equation}\label{eq:loewner-int}
        \int_0^\infty F(t)\,dt
        \preceq
        \int_0^\infty G(t)\,dt.
    \end{equation}
\end{lemma}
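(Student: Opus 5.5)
The natural route is to reduce the matrix inequality to a family of scalar inequalities, one for each test vector, and then use the fact that scalar integration preserves order. By definition, $\int_0^\infty G(t)\,dt - \int_0^\infty F(t)\,dt$ must be shown to be symmetric positive semidefinite. Both integrals exist entrywise by hypothesis, so their difference equals the entrywise improper integral $\int_0^\infty (G(t)-F(t))\,dt$. This uses only linearity of improper Riemann integrals applied entry by entry: a limit of a difference is the difference of limits.

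\textbf{Symmetry.} Each $F(t)$ and $G(t)$ is symmetric, so $(H(t))_{ij} = (H(t))_{ji}$ for every $t$, where $H = G - F$. Integrating entrywise gives the same improper integral for the $(i,j)$ and $(j,i)$ entries, so $\int_0^\infty H(t)\,dt$ is symmetric.

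\textbf{Positive semidefiniteness.} Fix an arbitrary $v\in\R^n$ and consider the scalar function $h_v(t) = v^T H(t) v = \sum_{i,j} v_i v_j H_{ij}(t)$. It is continuous, and $h_v(t)\ge 0$ for all $t\ge0$ because $F(t)\preceq G(t)$. Since it is a finite linear combination of the entries of $H$, its improper integral exists and satisfies
\begin{equation*}
\int_0^\infty h_v(t)\,dt = \sum_{i,j} v_iv_j\int_0^\infty H_{ij}(t)\,dt = v^T\Bigl(\int_0^\infty H(t)\,dt\Bigr)v .
\end{equation*}
For every $T>0$, the integral $\int_0^T h_v(t)\,dt$ of a nonnegative continuous function is nonnegative. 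Passing to the limit $T\to\infty$, which exists, preserves nonnegativity. Hence $v^T\bigl(\int_0^\infty H\bigr)v\ge0$ for all $v$, and this is exactly~\eqref{eq:loewner-int}.

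\textbf{Main obstacle.} The only point needing care is interchanging the quadratic form with the improper integral. Absolute integrability is not assumed, so we cannot appeal to dominated convergence. The interchange is nonetheless legitimate because $v^T H(t) v$ is a \emph{finite} linear combination of entries, each of which has a convergent improper integral by hypothesis. Linearity of limits therefore suffices, and no further convergence argument is required.
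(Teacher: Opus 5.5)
Your proof is correct and follows essentially the same route as the paper: reduce to $G-F$, observe that the truncated integrals $\int_0^T (G-F)\,dt$ have nonnegative quadratic forms, and pass to the limit using only linearity of the entrywise improper integrals. The one difference is cosmetic. The paper takes the limit at the matrix level by invoking closedness of the positive semidefinite cone, while you take the limit of the scalar quadratic form $v^T(\cdot)v$ for each fixed $v$, which is the same fact unpacked; you also verify symmetry explicitly, which the paper leaves implicit.
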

\begin{proof}
    For every $x \in \R^n$, we have
    \begin{equation*}
        x^T \bigl(G(t) - F(t)) x \ge 0,\qquad t \ge 0.
    \end{equation*}
    Therefore, the proper integral
    \begin{equation*}
        x^T\left(\int_0^\gamma (G(t) - F(t))\,dt\right)x
        =
        \int_0^\gamma x^T\bigl(G(t) - F(t)\bigr)x\,dt
        \ge 0.
    \end{equation*}
    exists for every $\gamma \ge 0$,
    and
    \begin{equation*}
        \int_0^\gamma \bigl(G(t) - F(t)\bigr)\,dt \succeq 0,\qquad \gamma \ge 0.
    \end{equation*}
    By linearity, the improper integral exists entrywise, and we have that
    \begin{equation*}
        \int_0^\infty \bigl(G(t) - F(t)\bigr)\,dt = \lim_{\gamma\to\infty}\int_0^\gamma \bigl(G(t) - F(t)\bigr)\,dt
    \end{equation*}
    Since the cone of positive semidefinite matrices is closed, the limit exists and is positive semidefinite, and therefore
    \begin{equation*}
        \int_0^\infty \bigl(G(t) - F(t)\bigr)\,dt \succeq 0.
    \end{equation*}
    Using again the linearity of the integral, we obtain~\eqref{eq:loewner-int}.
\end{proof}

Roughly speaking, the derivation of the following bound uses the integral representation~\eqref{eq:lyap-int} to express $X$ in terms of the exponentials of $A$ and $A^T$. If $A$ is strictly dissipative, then the smallest and largest singular values of these exponentials can be bounded in terms of the logarithmic norms. If, in addition, we assume that $C$ is symmetric positive definite, then the Loewner order allows us to carry these bounds through the integral representation, thereby yielding bounds on the extreme singular values of $X$ and hence on its condition number.

\begin{theorem} \label{Lyap-dissipativeA:thm}
Let $C$ be symmetric positive definite, and $A$ be strictly dissipative. Then, the unique solution $X$ to the Lyapunov equation $A X + X A^T = -C$
satisfies
\[
\kappa_2(X) \le \kappa_2(A+A^T)\, \kappa_2(C).
\]
\end{theorem}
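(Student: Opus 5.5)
We plan to use the integral representation~\eqref{eq:lyap-int}, $X=\int_0^\infty e^{tA}Ce^{tA^T}\,dt$. Strict dissipativity makes $A$ Hurwitz stable: for an eigenpair $Av=\lambda v$ we have $\mathrm{Re}\,\lambda = v^*\tfrac{A+A^T}{2}v/v^*v<0$. So the integral converges and gives the unique symmetric positive definite solution. Let the eigenvalues of $\tfrac{A+A^T}{2}$ lie in $[-\gamma,-\beta]$ with $0<\beta\le\gamma$, so that $\mu(A)\le-\beta$, $\mu(-A)\le\gamma$, and $\kappa_2(A+A^T)=\gamma/\beta$, where we take $\beta,\gamma$ to be the extreme eigenvalues in absolute value. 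The goal is a Loewner sandwich
\[
c_1\, I \preceq X \preceq c_2\, I,
\qquad\text{with}\qquad
\frac{c_2}{c_1}=\frac{\gamma}{\beta}\,\kappa_2(C).
\]

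First, we bound the integrand in the Loewner order. Since $\lambda_{\min}(C)I\preceq C\preceq\lambda_{\max}(C)I$, congruence by $e^{tA}$ gives
\[
\lambda_{\min}(C)\,e^{tA}e^{tA^T}\preceq e^{tA}Ce^{tA^T}\preceq \lambda_{\max}(C)\,e^{tA}e^{tA^T}.
\]
Next, we sandwich $e^{tA}e^{tA^T}$ between multiples of $I$ using singular values of the exponential. \Cref{log-norm:lem} gives the upper bound
\[
e^{tA}e^{tA^T}\preceq \norm{e^{tA}}_2^2 I \preceq e^{2t\mu(A)}I\preceq e^{-2\beta t}I.
\]
For the lower bound we use $\sigma_{\min}(e^{tA})=1/\norm{e^{-tA}}_2\ge e^{-t\mu(-A)}\ge e^{-\gamma t}$, so that $e^{tA}e^{tA^T}\succeq e^{-2\gamma t}I$.

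We then apply \cref{lem:loew-int}; all three integrals exist because the functions decay exponentially. This yields
\[
\frac{\lambda_{\min}(C)}{2\gamma}\,I\preceq X\preceq\frac{\lambda_{\max}(C)}{2\beta}\,I .
\]
Since $X$ is symmetric positive definite, $\kappa_2(X)=\lambda_{\max}(X)/\lambda_{\min}(X)\le (\gamma/\beta)\,\kappa_2(C)$. Finally, because $\tfrac{A+A^T}{2}$ is symmetric negative definite, its extreme absolute eigenvalues are exactly $\beta$ and $\gamma$ when these are chosen tight, so $\gamma/\beta=\kappa_2(A+A^T)$.

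The main point needing care is the lower bound on $e^{tA}e^{tA^T}$. It requires the inverse-exponential identity $(e^{tA})^{-1}=e^{-tA}$ together with the logarithmic-norm bound applied to $-A$. We must also make sure that $\mu(-A)=\gamma$, i.e. $\gamma=-\lambda_{\min}(\tfrac{A+A^T}{2})$. The remaining steps, namely the congruence preserving $\preceq$, the convergence of the integrals, and the identification of $\kappa_2(A+A^T)$, are routine.
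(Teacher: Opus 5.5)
Your proposal is correct and follows the paper's proof essentially step for step: the integral representation, the Loewner sandwich of the integrand via $\lambda_{\min}(C)$ and $\lambda_{\max}(C)$, the log-norm bounds $e^{-2\gamma t} I \preceq e^{tA}e^{tA^T} \preceq e^{-2\beta t} I$ (using $\sigma_{\min}(e^{tA}) = 1/\norm{e^{-tA}}_2$), integration via \cref{lem:loew-int}, and the identification $\gamma/\beta = \kappa_2(A+A^T)$. You explicitly take $\beta,\gamma$ to be the extreme eigenvalues (in absolute value) of the symmetric part. The final equality needs this, and the paper only states it implicitly, so your version is slightly more careful.
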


\begin{proof}
Since $A$ is strictly dissipative, there exist constants
$0<\beta\le \gamma$ such that
\begin{equation*}
    -\gamma I_m \preceq \frac{A+A^T}{2} \preceq -\beta I_m.
\end{equation*}
Using the logarithmic norm, we can write this equivalently as
\begin{equation*}
    \mu(A)\le -\beta,
    \qquad
    \mu(-A)\le \gamma.
\end{equation*}
By \cref{log-norm:lem}, we therefore have
\begin{equation*}
    \norm{e^{tA}}_2 \le e^{-\beta t},
    \qquad
    \norm{e^{-tA}}_2 \le e^{\gamma t},
    \qquad t\ge 0.
\end{equation*}
Since $e^{tA}$ is nonsingular, we have that
\begin{equation*}
    \sigma_{\min}(e^{tA})
    =
    \frac{1}{\norm{e^{-tA}}_2}
    \ge e^{-\gamma t},
\end{equation*}
and therefore
\begin{equation}\label{eq:exp-bound}
    e^{-2\gamma t} I_m
    \preceq
    e^{tA}e^{tA^T}
    \preceq
    e^{-2\beta t} I_m,
    \qquad t\ge 0.
\end{equation}
$C$ is symmetric positive definite, thus $\lambda_{\min}(C) I_m \preceq C \preceq \lambda_{\max}(C) I_m$, and therefore
\begin{equation}\label{eq:sylv-integrand-bound}
    \lambda_{\min}(C)e^{tA}e^{tA^T}
    \preceq
    e^{tA} C e^{tA^T}
    \preceq
    \lambda_{\max}(C)e^{tA}e^{tA^T}.
\end{equation}
Combining~\eqref{eq:exp-bound} and~\eqref{eq:sylv-integrand-bound}, we obtain
\begin{equation}\label{eq:lowner-bounds}
    \lambda_{\min}(C)e^{-2\gamma t} I_m
    \preceq
    e^{tA} C e^{tA^T}
    \preceq
    \lambda_{\max}(C)e^{-2\beta t} I_m.
\end{equation}
Since $\norm{e^{tA}}_2\le e^{-\beta t}$, the integral \eqref{eq:lyap-int} is convergent, and $X$ satisfies the Lyapunov equation $AX+XA^T=-C$.
By~\cref{lem:loew-int}, we can integrate the Loewner bounds~\eqref{eq:lowner-bounds}, obtaining
\begin{equation*}
    \frac{\lambda_{\min}(C)}{2\gamma} I_m
    \preceq
    X
    \preceq
    \frac{\lambda_{\max}(C)}{2\beta} I_m,
\end{equation*}
which shows that
\begin{equation*}
    \lambda_{\min}(X)
    \ge
    \frac{\lambda_{\min}(C)}{2\gamma},
    \qquad
    \lambda_{\max}(X)
    \le
    \frac{\lambda_{\max}(C)}{2\beta},
\end{equation*}
and therefore that $X$ is positive definite.
Therefore,
\begin{equation}\label{eq:first-bound}
    \kappa_2(X)
    =
    \frac{\lambda_{\max}(X)}{\lambda_{\min}(X)}
    \le
    \frac{\gamma}{\beta}\kappa_2(C).
\end{equation}
Finally, the eigenvalues of $A+A^T$ lie in $[-2\gamma,-2\beta]$, whic implies that
\begin{equation}\label{eq:symm-part-cond-bound}
    \kappa_2(A+A^T)=\frac{\gamma}{\beta},
\end{equation}
Plugging~\eqref{eq:symm-part-cond-bound} into~\eqref{eq:first-bound} concludes the proof.
\end{proof}

Note that, in the special case $C=I$, the bounds reduce to \[
\|X\|_2 \le \frac{1}{2\beta},
\qquad
\|X^{-1}\|_2 \le 2\gamma,
\qquad
\kappa_2(X)\le \frac{\gamma}{\beta} = \kappa_2(A+A^T).
\]

\begin{remark}
We note that dissipativity of $A$ is a stronger assumption than Hurwitz stability. For example, $A = \begin{bmatrix}
-1 & M\\
0 & -1
\end{bmatrix}$ is Hurwitz stable, but not dissipative for $M > 2$ as its symmetric part
$\frac{1}{2} \begin{bmatrix}
-2 & M\\
M & -2
\end{bmatrix}$ has eigenvalues $-1 \pm \frac{M}{2}$. Then, with $C = I$, the solution is
$X = \frac{1}{4} \begin{bmatrix}
M^2+2 & M\\
M & 2
\end{bmatrix}$
with $\kappa_2(X) \approx M^2$ which could be huge.
\end{remark}

\section{Experiments} \label{sec:experiments}
The following example illustrates \cref{spd-A-C-lyap:thm}.

\begin{example}
\label{spd-A-C-lyap:ex}
We construct $10{,}000$ Lyapunov equations of the form $AX+XA^T = -C$ using randomly generated $3 \times 3$ matrices $A$ and $C$, where $A$ is symmetric positive definite and $C$ is symmetric negative definite. The matrix $A$ is generated as $A = Q D Q^T$ where $Q$ is the orthogonal factor in the QR decomposition of a random matrix with normally distributed entries and $D$ is a diagonal matrix with nonzero entries drawn uniformly from $[10^{-8}, 1]$. The matrix $C$ is generated in the same way and then multiplied by $-1$.

We then solve the resulting Lyapunov equations using MATLAB's \texttt{lyap} function. The left panel of \cref{spd-A-C-lyap:fig} shows a histogram of the 2-norm condition numbers of the computed solutions $X$. The median value of $\kappa_2(X)$ is $1.94 \times 10^4$, while the smallest and largest values are $2.97$ and $1.47 \times 10^{12}$, respectively.

\begin{figure}[t]
\centering

\center
\includegraphics[height=0.45\textwidth]{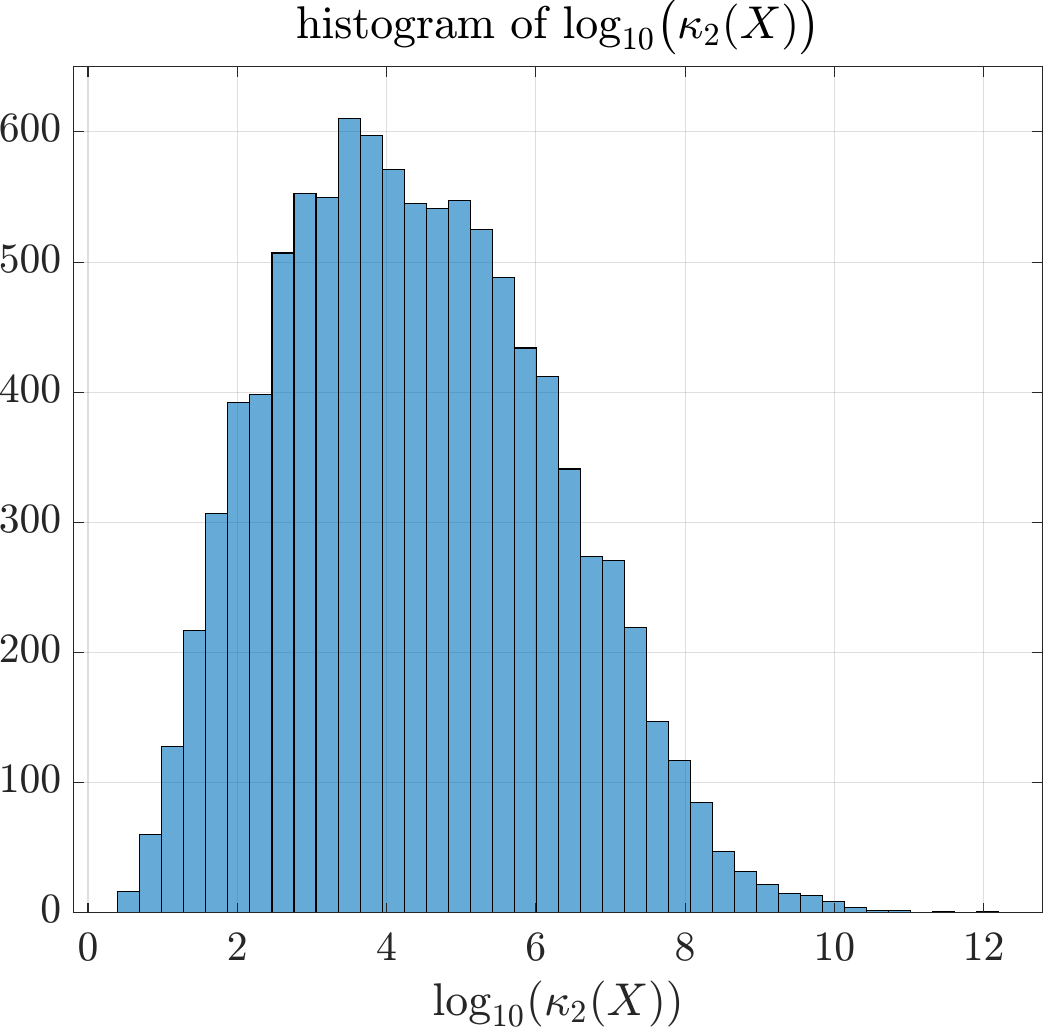}
\hfill
\includegraphics[height=0.45\textwidth]{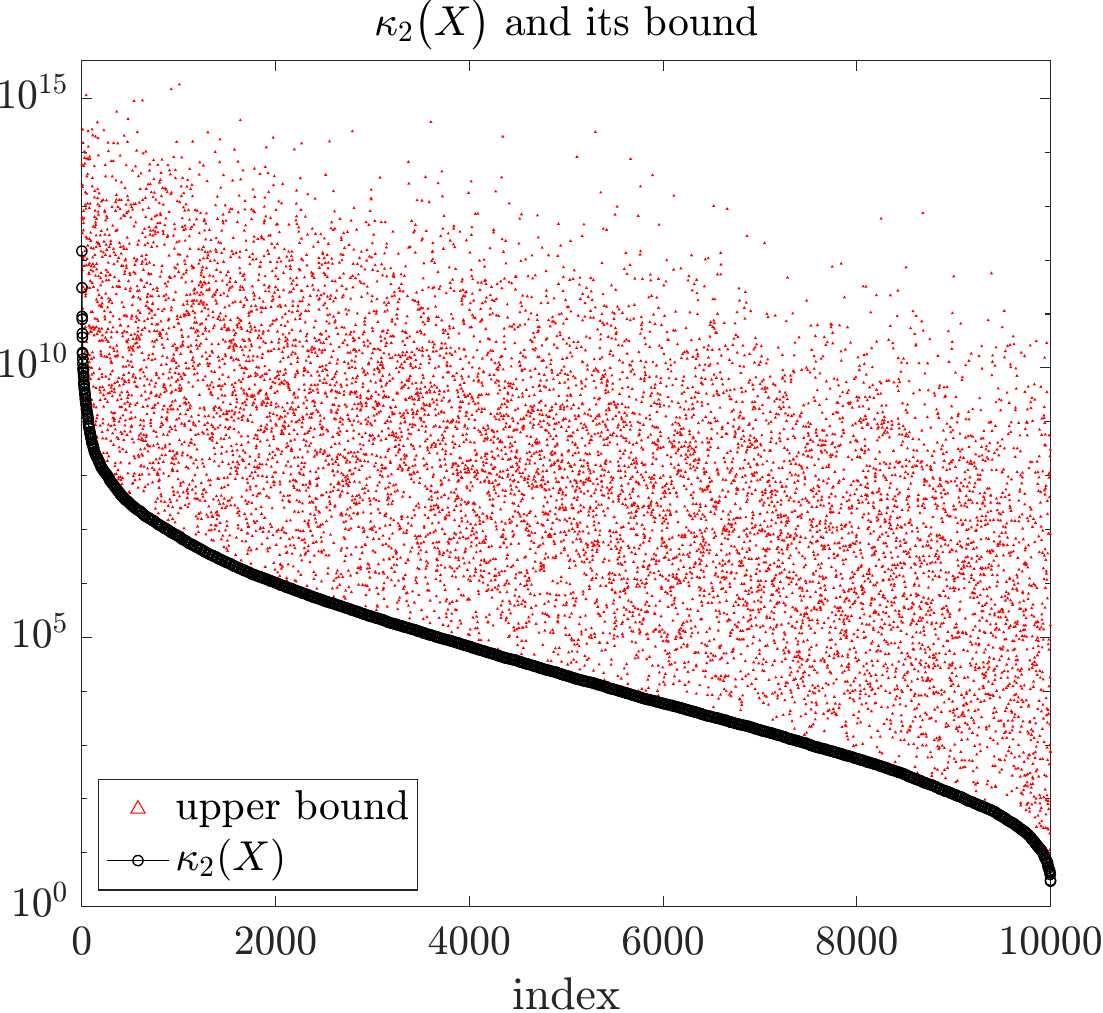}
\caption{Results for \cref{spd-A-C-lyap:ex}: frequency distribution of the condition numbers of the solutions (left), and comparison of the condition numbers with the bound (right).
}
\label{spd-A-C-lyap:fig}
 \end{figure}

In \cref{largest-kappaX-example:tab}, we report the details of the matrix whose solution achieves the larges spectral condition number, namely  $\kappa_2(X) \approx 1.47 \times 10^{12}$. The smallest singular value of the resulting matrix $\mathcal{A}$ is $9.88 \times 10^{-8}$.
\begin{table}[t]
\centering
\caption{Data for the test matrix producing the largest observed value of $\kappa_2(X)$.}
\label{largest-kappaX-example:tab}
\begin{tabularx}{\linewidth}{Xcc}
\toprule
Matrix & Eigenvalues & 2-norm condition number \\
\midrule
$A$ &
$4.94 \times 10^{-8},\, 3.60 \times 10^{-3},\, 1.69 \times 10^{-1} $ &
$3.42 \times 10^{6\hphantom{0}}$ \\

$C$ &
$-1.02 \times 10^{-8},\, -8.61 \times 10^{-8},\, -1.77 \times 10^{-1} $ &
$1.73 \times 10^{7\hphantom{0}}$ \\

$X$ &
$9.86 \times 10^{-7},\, 4.63,\, 1.45 \times 10^{6}$ &
$1.47 \times 10^{12}$ \\
\bottomrule
\end{tabularx}
\end{table}

\begin{table}[!ht]
\centering
\caption{Data for the test matrix producing the largest observed gap between $\kappa_2(X)$ and the bounds in \cref{spd-A-C-lyap:thm}.}
\label{largest-kappaX-gap-example:tab}
\begin{tabularx}{\linewidth}{Xcc}
\toprule
Matrix & Eigenvalues & 2-norm condition number \\
\midrule
$A$ &
$8.24 \times 10^{-7},\, 3.72 \times 10^{-4},\, 2.43 \times 10^{-1}$ &
$2.95 \times 10^{5}$ \\

$C$ &
$-1.74 \times 10^{-8},\, -3.69 \times 10^{-7},\, -4.39 \times 10^{-1}$ &
$2.51 \times 10^{7}$ \\

$X$ &
$8.57 \times 10^{-1},\, 2.55 \times 10^{1},\, 1.76 \times 10^{2} $ &
$2.06 \times 10^{2}$ \\
\bottomrule
\end{tabularx}
\end{table}

The bounds in \cref{spd-A-C-lyap:thm} are a priori bounds and may overestimate the actual condition numbers.\footnote{In principle, sharp estimates may be obtained, but this comes at the cost of requiring knowledge of all eigenvectors of $A$ and working directly with $\widetilde C \circ R$ which is the transformed solution $\widetilde X$; see \cref{kappa-lyap-normal:cor}. Such estimates are therefore more naturally viewed as a posteriori rather than a priori.} In the right panel of \cref{spd-A-C-lyap:fig}, we display the condition number and relative bound for 200 of the sampled matrices, ordered by condition number. There are examples in which $A$ and $C$ are moderately ill-conditioned, while $X$ remains well conditioned. In our numerical experiments, the largest observed gap between the actual value of $\kappa(X)$ and the bounds in \cref{spd-A-C-lyap:thm} occurs for the test problem reported in \cref{largest-kappaX-gap-example:tab}. We attribute this gap to the overestimation caused by inequalities such as~\eqref{max-sv-hadamard:eq}.

\end{example}

\subsection{Illustration of the Zolotarev lower bounds}
We now present two examples to illustrate the lower bounds on $\kappa_2(X)$ derived using Zolotarev numbers in \cref{zolotarev:sec}.

\begin{example}
\label{zolotarev:ex}
We construct a Sylvester equation with $m=n=50$. The coefficients $A = Q_A D_A Q_A^T$ and $B = Q_B D_B Q_B^T$ are symmetric, with $Q_A$ and $Q_B$ random orthogonal matrices and $D_A$ and $D_B$ diagonal matrices with nonzero entries logarithmically spaced in $[a, b]$ and $[-b, -a]$, respectively.\footnote{For results on the sharpness of Zolotarev bounds and the possible gap between such bounds for a discrete set of nodes and its convex hull, see \cite[pp.~395-396]{begr10} and \cite[p.~1570]{mpr18}, and the references therein.} We set $a = 0.5$ and $b = 10$ and randomly perturb the nodes in $[-b, -a]$ so that they are not simply reflections\footnote{In the absence of this perturbation, the problem corresponds to a Lyapunov equation, and the results are qualitatively similar.} of those in $[a, b]$. We choose the right-hand side matrix $C = M N^T$, where $M$ and $N$ are
length-$n$ vectors with
entries
drawn independently from a normal distribution.

\begin{figure}[t]
\centering
\hspace{40pt}

\center
\includegraphics[height=0.41\textwidth]{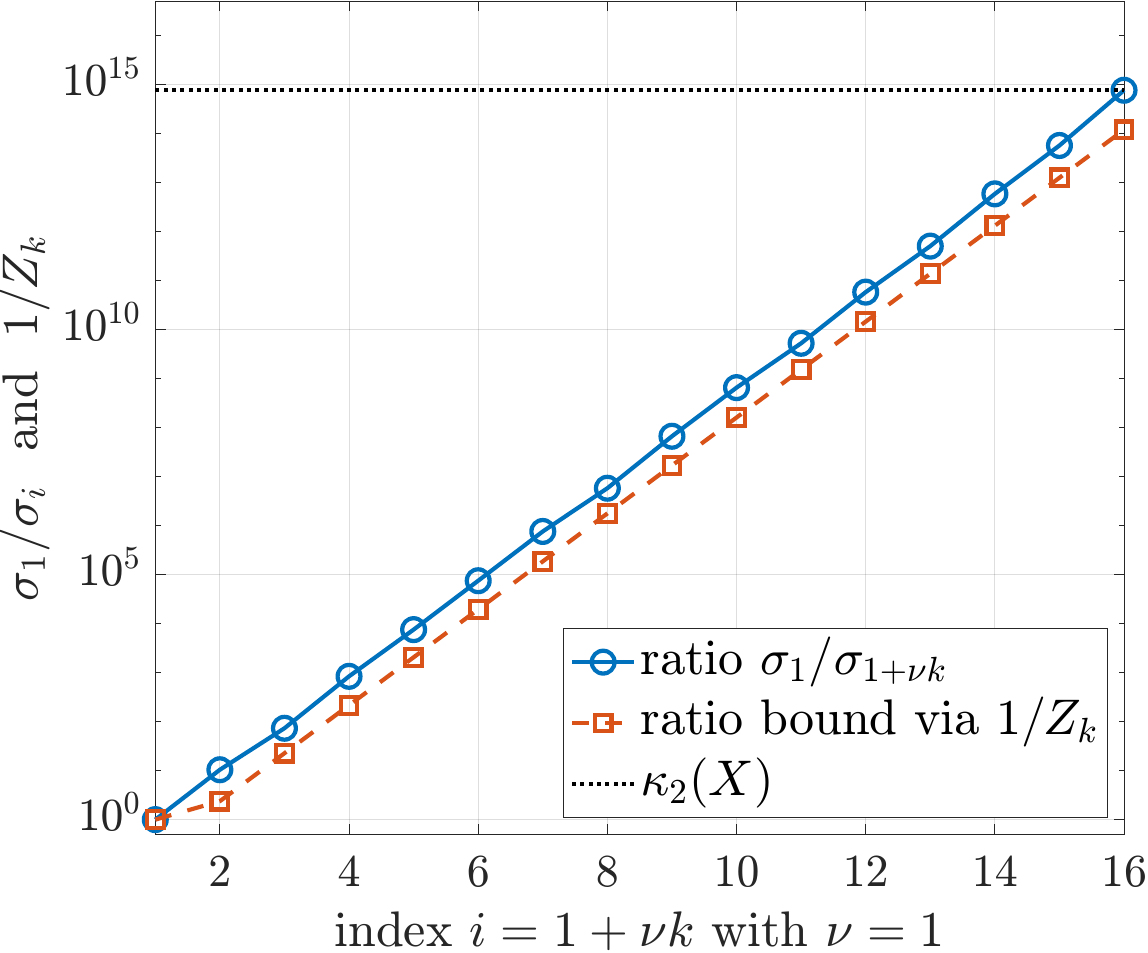}
\hfill
\includegraphics[height=0.41\textwidth]{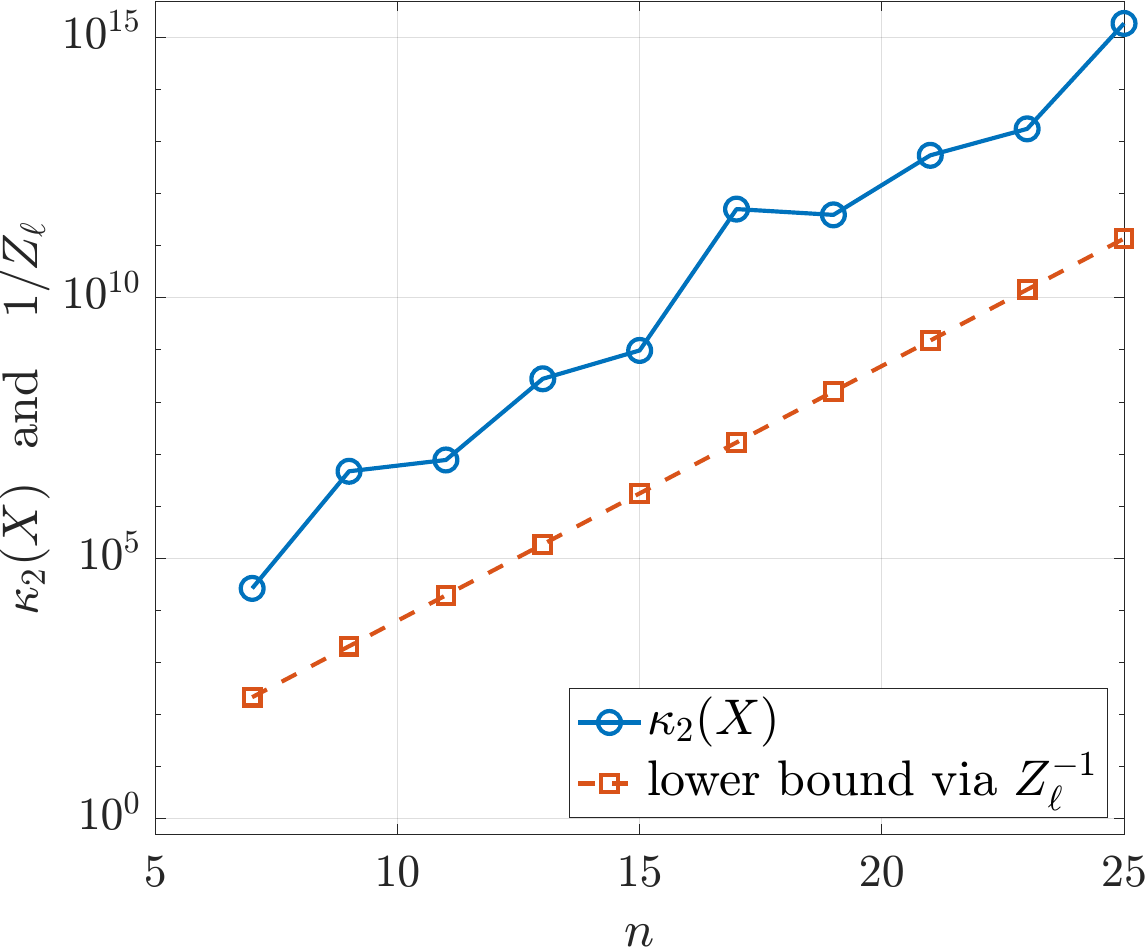}
\caption{Left: Ratio of $\sigma_1(X)$ to other singular values versus their lower bounds via Zolotarev numbers. Here, $n=50$ and the numerical rank of $X$ is $16$. See \cref{zolotarev:ex}. Right: Growth of $\kappa_2(X)$ and its lower bound $Z_{\ell}^{-1}$ across ten problems with varying size $n$. See~\cref{zolotarev2:ex}.}
\label{zolotarev:fig}
\end{figure}

We then compute the ratios
\[
\frac{\sigma_1(X)}{\sigma_{i}(X)}, \qquad i = 1+k, \quad k=0,1,\ldots,\ell, \qquad \ell = \biggl\lfloor \frac{\widehat r-1}{\nu}\biggr\rfloor,
\]
where $\widehat r$ denotes the numerical rank of the solution $X$. In this example, $\widehat r=16$ and $\ell=15$. The left panel of \cref{zolotarev:fig} shows these ratios together with the corresponding bounds from \cref{Zol-lower-bound-normal-thm}. As the index $1+k$ increases, the singular-value ratios approach $\kappa_2(X)$, indicated by the horizontal line. The Zolotarev lower bounds closely track these ratios and ultimately the condition number as well. In particular, for $k=5,10$, and $15$, we obtain the following:
\[
\begin{alignedat}{3}
7.6 \times 10^{4}
&\approx \frac{\sigma_1(X)}{\sigma_{6}(X)}
&\; {}\ge{} \;& Z_{5}(E,F)^{-1}
&\; {}\ge{} \;& 1.9 \times 10^{4}, \\[1mm]
5.8 \times 10^{9}
&\approx \frac{\sigma_1(X)}{\sigma_{11}(X)}
&\; {}\ge{} \;& Z_{10}(E,F)^{-1}
&\; {}\ge{} \;& 1.5 \times 10^{9}, \\[1mm]
4.8 \times 10^{14}
&\approx \frac{\sigma_1(X)}{\sigma_{16}(X)}
&\; {}\ge{} \;& Z_{15}(E,F)^{-1}
&\; {}\ge{} \;& 1.2 \times 10^{14}.
\end{alignedat}
\]
The lower bounds for $Z_k(E,F)^{-1}$ displayed above are computed as $\frac{1}{4}\rho^{2k}$. This expression is obtained from the upper bound in~\eqref{Zol-bnd-ring-fun:eq}, with $\ell$ replaced by the more general index $k$, where $\rho = \exp\big(\frac{\pi^2}{2\mu(\lambda)}\big)$.
The Gr\"otzsch function $\mu(\lambda)$ is evaluated at
$\lambda=\frac{a}{b}=0.05$ with the help of the MATLAB function \verb+ellipke+; see~\cite[Eq.~(3.2)]{beto19}.

\end{example}

The next example focuses solely on bounds on the condition number of the solution of a few different Lyapunov equations of varying size.
\begin{example}
\label{zolotarev2:ex}
We construct ten Lyapunov equations of size $n \times n$ with $n = 7, 9, 11, \dots, 25$ and with increasingly ill-conditioned solutions. For each problem, the matrix $A$ is generated as in the previous example, and we set $B= -A^T$ and $C = M N^{T}$
of rank $\nu =  2$. We then compute $\kappa_2(X)$ and the corresponding Zolotarev lower bound in~\eqref{kappa-Zol-bnd-ring-fun:eq}. The right panel of \cref{zolotarev:fig} shows that the Zolotarev bounds can closely track the growth of the condition number of the solution to such equations.

\end{example}

\subsection{When can a small separation lead to an ill-conditioned solution?} \label{experiments-sep-cond:sec}
The following examples examine simple Sylvester equations with the aim of exploring conditions under which a small separation may or may not contribute to the ill-conditioning of the solution. We begin with the diagonal setting.

\begin{example}
\label{diagonal_sylv_n_3:ex}

Let $A = \diag(1,4,7)$ and $B = \diag(10,1-\delta,20)$. Then, $\sep(A,B)$ is attained at the smallest spectral gap $\alpha_1-\beta_2=\delta$. Since $A$ and $B$ are diagonal, the solution is given by $X = C \circ R$, where $R$ is the Cauchy matrix~\eqref{rij:eq}
\[
    \renewcommand{\arraystretch}{1.4}
    R =
    \begin{bmatrix}
        -\frac{1}{9} &  \frac{1}{\delta}   &   -\frac{1}{19}\\
        -\frac{1}{6} &  \frac{1}{3+\delta} &   -\frac{1}{16}\\
        -\frac{1}{3} & \frac{1}{6+\delta}  &   -\frac{1}{13}
    \end{bmatrix}.
\]
We test 30 logarithmically spaced values of $\delta$ in the interval $[10^{-6},1]$ and plot $\kappa_2(X)$ in the left panel of \cref{sep-cond:fig}. Let $E_{12}$ denote the zero matrix unless its $(1,2)$ entry set to one and $\mathbf{1}$ denote the matrix of all-ones. The right-hand side $C$ is chosen as one of the following four matrices:
\[
I+E_{12} =\begin{bmatrix}
    1   &  1  &   0\\
     0 &    1  &   0\\
     0  &   0  &   1
\end{bmatrix}, \quad
\mathbf{1} = \begin{bmatrix}
    1   &  1  &   1\\
     1 &    1  &   1\\
     1  &   1  &   1
\end{bmatrix},
\quad
\mathbf{1} - E_{12} = \begin{bmatrix}
    1   &  0  &   1\\
     1 &    1  &   1\\
     1  &   1  &   1
\end{bmatrix},
\quad
I = \begin{bmatrix}
    1   &  0  &   0\\
     0 &    1  &   0\\
     0  &   0  &   1
\end{bmatrix}
\]
the key distinction being that $C_{12}$ is nonzero in the first two and zero in the latter two.
\begin{figure}[t]
\centering

\center
\includegraphics[height=0.44\textwidth]{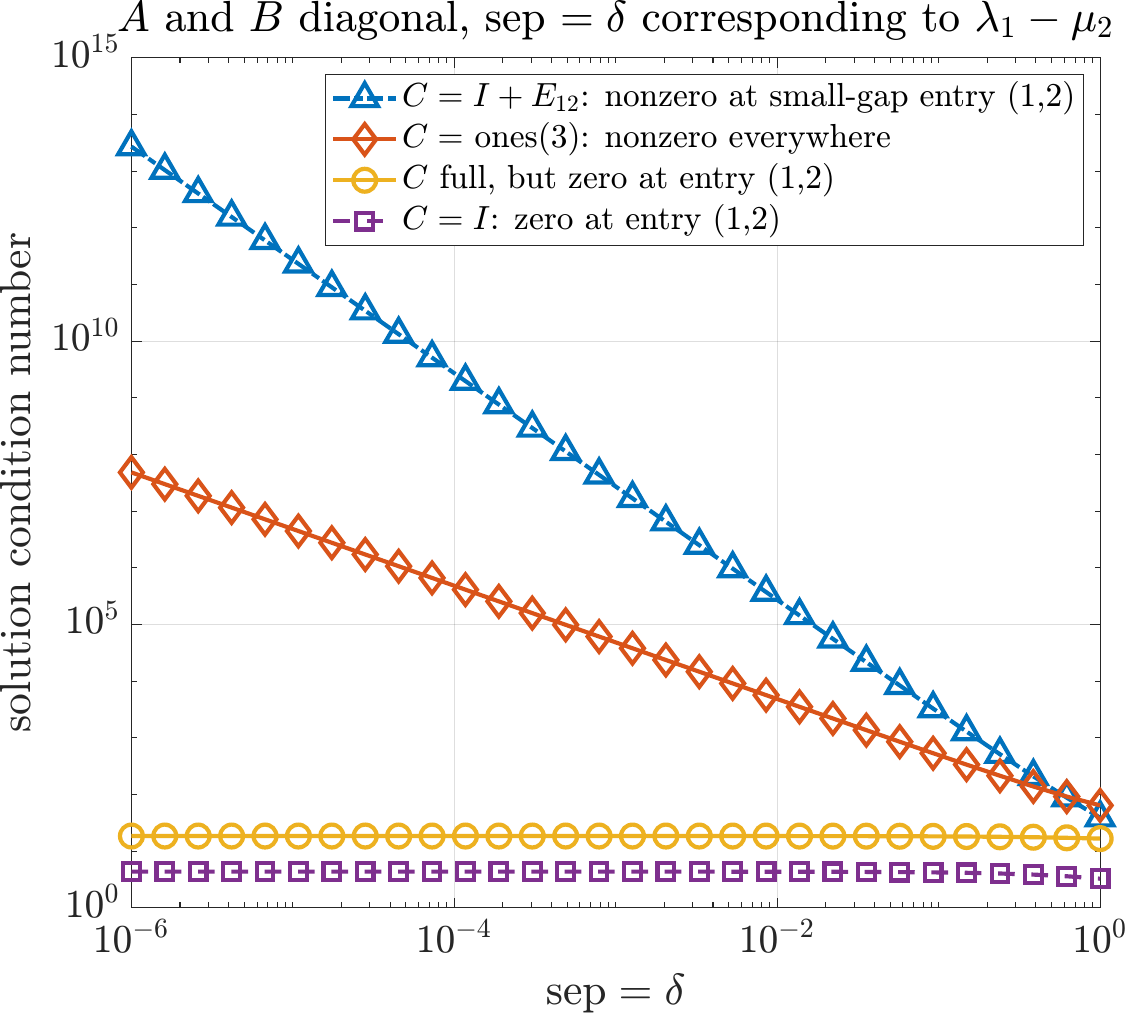}
\hfill
\includegraphics[height=0.44\textwidth]{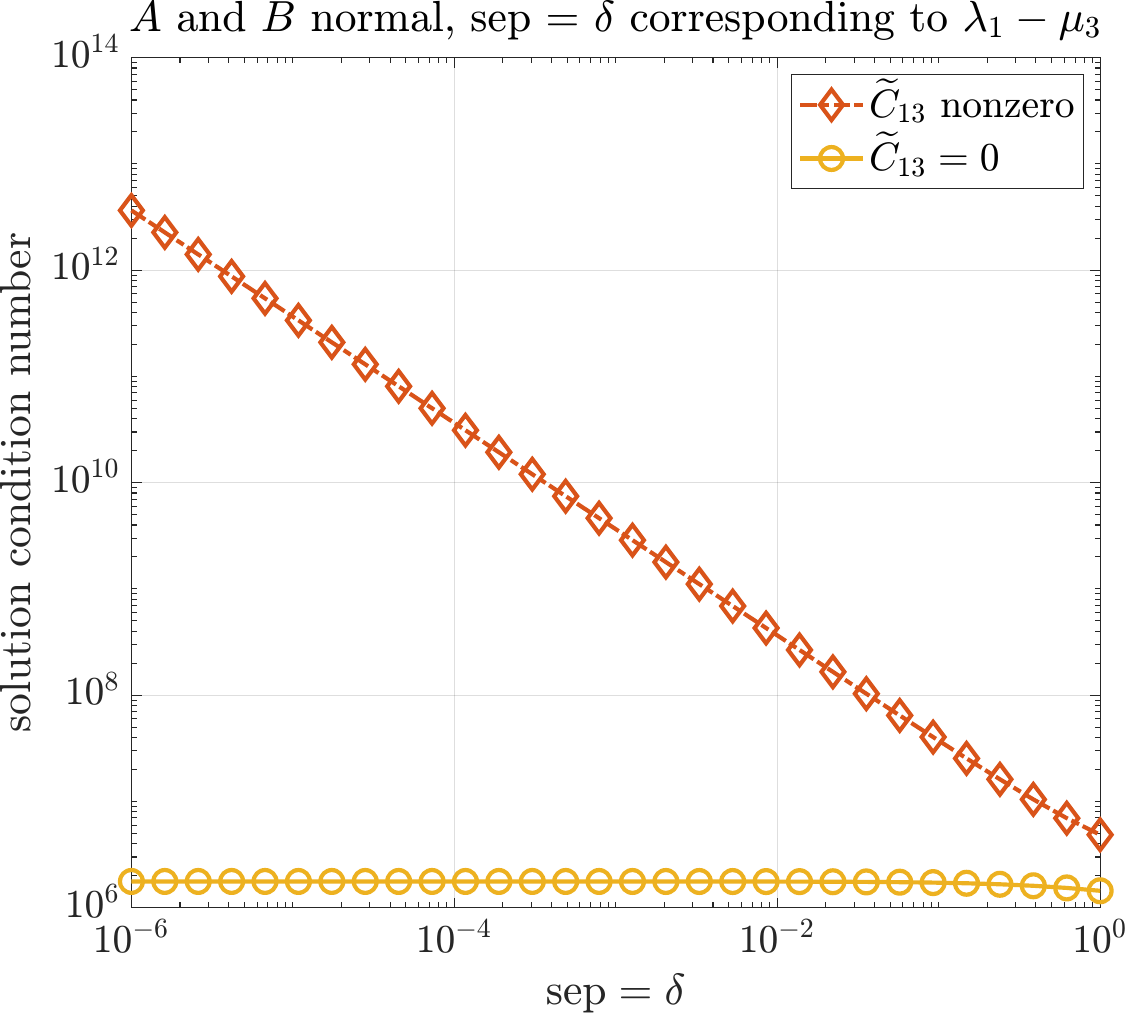}
\caption{Left (\cref{diagonal_sylv_n_3:ex}): $\kappa_2(X)$ versus separation with diagonal $A, B$ and four different right-hand side matrices $C$. Here, $n=3$ and the smallest gap between the spectra of $A$ and $B$ occurs at the first eigenvalue $\lambda_1(A) = 1$ and the second eigenvalue $\mu_2(B) = 1-\delta$. Right (\cref{normal_sylv_n_5:ex}): $\kappa_2(X)$ versus separation with full normal $A, B$ and two different right-hand sides $C$. Here, $n=5$ and the smallest gap between the spectra of $A$ and $B$ occurs at the first eigenvalue $\lambda_1(A) = 1$ and the third eigenvalue $\mu_3(B) = 1-\delta$.
}
\label{sep-cond:fig}
 \end{figure}

We observe that, with the first $C$, $\kappa_2(X)$ is of order $\delta^{-2}$, whereas in the second case $\kappa_2(X)$ is of order $\delta^{-1}$. In both cases, $\sigma_{\max}(X)$ is the same; the difference is that $\sigma_{\min}(X)$ is larger for the second choice of $C$, which reduces $\kappa_2(X)$ from order $\delta^{-2}$ to $\delta^{-1}$. Therefore, the rate at which the solution becomes ill conditioned depends on how the right-hand side $C$ loads the small gap between the two spectra. We also observe that $X$ remains well conditioned when the $(1,2)$ entry of $C$ is zero.
\end{example}

\begin{example}
\label{normal_sylv_n_5:ex}
We now revisit the previous observations in the case where $A$ and $B$ are full normal matrices, hence their eigenvector matrices do not affect the conditioning of the solution $X$. See \cref{kappa-lyap-normal:cor}.

We set $\lambda = [1, 3, 5, 7, 9]$, $\mu = [20,\ 22,\ 1-\delta,\ 24,\ 26]$ and take $A= U_A \diag(\lambda) U_A^T$ and $B= U_B \diag(\mu) U_B^T$ where $U_A$ and $U_B$ are $5\times 5$ randomly generated orthogonal matrices. With this construction, the smallest gap between the two spectra, and hence the separation of $A$ and $B$, is equal to $\delta$ and this gap is attained by the eigenvalue pair ($\lambda_1, \mu_3)$.

We test 30 logarithmically spaced values of $\delta$ in the interval $[10^{-6},1]$ and plot $\kappa_2(X)$ in
the right panel of \cref{sep-cond:fig}. We consider two choices for the transformed right-hand side matrix $\widetilde C$. The first is the all-ones matrix, which in particular keeps the (1,3) entry of the Cauchy matrix $R$ `active' in $\widetilde X$. The second is again the all-ones matrix, except that its (1,3) entry is set to zero. This removes the contribution associated with the separation and thereby keeps the transformed solution $\widetilde X$ controlled. In each case, the actual right-hand side $C$ is $C = U_A \widetilde C U_B^T$; see~\eqref{Ctilde-def:eq} and~\eqref{Xtilde-def:eq}. In both cases, the right-hand side matrices $C$ are fully dense and have visually indistinguishable patterns in the original basis. For reference, the two matrices corresponding to $\delta = 10^{-6}$ are shown below, rounded to three significant digits
\[
\resizebox{\linewidth}{!}{$\displaystyle
\begin{bmatrix}
 -0.239 & -0.112 & -1.29 & -0.568 & -1.02\\
  -0.00985 &  0.0850 &  1.86  & 0.659 &  1.33\\
   0.0250 &  0.0833  & 1.66 &  0.605  & 1.20\\
  -0.525 & -0.217 & -2.22 & -1.028 & -1.78\\
   0.703 &  0.169  & 0.365  & 0.447 &  0.521
\end{bmatrix},
\begin{bmatrix}
    -0.314 &-0.128 & -1.29&  -0.601 & -1.04\\
   0.450  & 0.184 &  1.85  & 0.862 &  1.49\\
   0.403 &  0.165  & 1.66 &  0.771 &  1.34\\
  -0.541 & -0.220 & -2.22  &-1.03 & -1.79\\
   0.0927 &  0.0379  & 0.381&   0.178 &  0.307
\end{bmatrix}.
$}
\]

We note that the first matrix $C$ corresponds to $\widetilde C_{13}=0$ and gives a solution $X$ with condition number $1.8 \times 10^6$, whereas the second matrix $C$ gives a solution with $\kappa_2(X) \approx 3.7 \times 10^{12}$. The norms of the two matrices $C$ are comparable: the first has norm $4.83$, while the second has norm $5$. From the construction of $\widetilde C$, and since multiplication by orthogonal matrices preserves rank, it is clear that the first matrix $C$ above has rank two, whereas the second has rank one. Both matrices $C$ are extremely ill-conditioned numerically, with computed condition numbers $8.7 \times 10^{18}$ and $3.3 \times 10^{18}$, respectively.

Our results show that the observations from \cref{diagonal_sylv_n_3:ex} essentially carry over to normal matrices. In this setting, however, whether a small separation contributes to the ill-conditioning of the solution is determined by the entries of $\widetilde C$, that is, by the representation of $C$ in the eigenbases of $A$ and $B$, rather than by the literal entries of $C$ in the original basis.
\end{example}

\bibliographystyle{plain}
\bibliography{refs}

\end{document}